\documentclass{amsart}
\usepackage{amsrefs}


\usepackage{amsfonts}
\usepackage{amsmath}
\usepackage[arrow,matrix,tips, curve]{xy}
\usepackage{amssymb}
\usepackage{amsthm}
\usepackage{latexsym}
\usepackage{url}
\usepackage{verbatim}

\usepackage{fancyref}

\theoremstyle{plain}
\newtheorem{theorem}{Theorem}
\newtheorem{proposition}[theorem]{Proposition}
\newtheorem{lemma}[theorem]{Lemma}
\newtheorem{corollary}[theorem]{Corollary}

\theoremstyle{definition}
\newtheorem{definition}{\mdseries\scshape Definition}

\theoremstyle{remark}

\newtheorem{remark}{\mdseries\scshape Remark}

\newtheorem{example}{\mdseries\scshape Example}


\newcommand{\set}[1]{\left\{#1\right\}}

\newcommand{\Z}{\mathbb{Z}}
\newcommand{\Zp}{\mathbb{Z}_p}

\newcommand{\oneunit}[1]{\left\langle#1\right\rangle}

\DeclareMathOperator{\ord}{ord}

\begin{document}

\title[Welch equation analysis]{Deconstructing the Welch Equation Using $p$-adic Methods}

\author{Abigail Mann}

\address{Department of Mathematics,
Rose-Hulman Institute of Technology,
5500 Wabash Avenue,
Terre Haute, IN 47803, USA}

\email{mannal@rose-hulman.edu}

\author{Adelyn Yeoh}

\address{Department of Mathematics and Statistics, 
Mount Holyoke College,
50 College Street, South Hadley,
  MA 01075, USA}

\email{yeoh22a@mtholyoke.edu}

\thanks{The first author would like to thank the Rose-Hulman Weaver and Rose Summer Undergraduate Research Programs, the Rose-Hulman Mathematics Department, and Dr. 
William Heller for their financial support, and Mount Holyoke College for their hospitality during her REU. The second author would like to thank the Mount Holyoke College Hutchcroft Fund for funding her summer REU}

\date{July 29, 2014}

\keywords{discrete logarithm, discrete exponential, Hensel's
  lemma, $p$-adic interpolation, fixed points, Welch equation}

\begin{abstract}
The Welch map $x \rightarrow g^{x-1+c}$ is similar to the discrete exponential map $x \rightarrow g^x$, which is used in many cryptographic applications including the ElGamal signature scheme. This paper analyzes the number of solutions to the Welch equation: $g^{x-1+c} \equiv x \pmod{p^e}$ where $p$ is a prime and $g$ is a unit modulo $p$, and looks at other patterns of the equation that could possibly be exploited in a similar cryptographic system. Since the equation is modulo $p^e$, where $p$ is a prime number, $p$-adic methods of analysis are used in counting the number of solutions modulo $p^e$. These methods include: $p$-adic interpolation, Hensel's lemma and Chinese Remainder Theorem. 
\end{abstract}

\maketitle
\section{Introduction}
The Welch equation: $g^{x-1+c} \equiv x \pmod{p^e}$ is typically used as an algorithm to produce Costas arrays, which are permutation matrices with certain desirable properties (see~\cite{rickard}). These arrays have applications in SONAR detection (see~\cite{drakakis}). The Welch equation is typically not associated with cryptography. However, the complexity associated with this equation may allow application in cryptography as well. In particular, we note that if viewed as a map, $x \rightarrow g^{x-1+c}$, the equation looks very similar to the discrete exponential map, $x \rightarrow g^x$. We follow a suggestion from Holden and Robinson (see~\cite[Section 8]{hold_rob}) to count the number of solutions to the Welch equation. 

In dissecting this problem, we first turn the equation into a function, $f_g(x,c):\Z \times \Z \rightarrow \Z/p^e\Z,$ where $f_g(x, c) = g^{x-1+c} - x \pmod{p^e}.$ When $g$ is fixed throughout a theorem we will suppress it and write $f(x,c)$. When $c$ is fixed, we write $f(\cdot ,c)$, and when $x$ is fixed, we write $f(x, \cdot )$. We then observe the output when $p$ is an odd prime. We also consider the special case when $p$ is equal to $2$. In the process leading up to counting the number of solutions, we observe that the output of $f(x, c)$ is periodic in $c$ and in $x$, allowing us to restrict the domain for these variables.

There are other characteristics of the Welch equation shown in Section~\ref{characteristics}. Most notably, when $g$ is a primitive root of modulo $p^e$, we find it easy to count the pairs of solutions $(x, c)$. The difficulty in this equation is how to count in the cases when $g$ is not a primitive root. Section~\ref{hintnonprg} describes a method to overcome this obstacle.

Our approach to counting solutions is to first begin with the function modulo $p$. In order to successfully count solutions modulo $p^e$, we will need to interpolate the equation. Section~\ref{interpolation} goes over the process of $p$-adic interpolation for our function. Finally in Section~\ref{numofsoln}, we count the solutions using different approaches to the equation: by fixing $c$ and treating $x$ as a variable, by treating both $x$ and $c$ as variables, and by looking at $p$ equal to $2$. 

For this paper, we assume that $g$ is a unit modulo $p$. Further, note that when $g$ is a unit, $x$ must also be a unit modulo $p$ in order to be a solution. Note that, unless otherwise stated, we take $m$ to be the multiplicative order of $g$ modulo $p$.

\section{Periodicity}

\begin{theorem}\label{thm:repetition}Let $f$ be the function $f(x, \cdot) = g^{x-1+c} - x \pmod{p^e}$, where $c \in \mathbb{Z}$, and $p$ is a prime. Then, we see that $f(x, c) = f(x, c + m \cdot p^{e-1})$.
\end{theorem}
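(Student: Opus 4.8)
The plan is to reduce the claimed identity to a single congruence about powers of $g$, and then establish that congruence by a lifting argument. Since $f(x,c) = g^{x-1+c} - x$, the difference satisfies
\[
f(x,c) - f(x,\, c + m p^{e-1}) \equiv g^{x-1+c} - g^{x-1+c+m p^{e-1}} \equiv g^{x-1+c}\bigl(1 - g^{m p^{e-1}}\bigr) \pmod{p^e}.
\]
Because $g$ is a unit modulo $p$, the factor $g^{x-1+c}$ is a unit modulo $p^e$ and may be discarded; hence everything reduces to proving the key congruence $g^{m p^{e-1}} \equiv 1 \pmod{p^e}$.

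To prove this, first I would use that $m$ is the multiplicative order of $g$ modulo $p$, so $g^m \equiv 1 \pmod p$ and we may write $g^m = 1 + p a$ for some integer $a$. The heart of the argument is then the lifting lemma: for every integer $n \ge 0$ one has $(1 + p a)^{p^n} \equiv 1 \pmod{p^{n+1}}$, which I would prove by induction on $n$. The base case $n = 0$ is immediate. For the inductive step, writing $(1+pa)^{p^{n-1}} = 1 + p^n b$, I would raise to the $p$-th power and expand by the binomial theorem: every term $\binom{p}{k} p^{kn} b^k$ with $k \ge 2$ is divisible by $p^{kn}$, and since $kn \ge 2n \ge n+1$ whenever $n \ge 1$, all such terms vanish modulo $p^{n+1}$, leaving $(1+pa)^{p^n} \equiv 1 + p^{n+1} b \equiv 1 \pmod{p^{n+1}}$. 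Taking $n = e-1$ gives $g^{m p^{e-1}} = (g^m)^{p^{e-1}} = (1 + pa)^{p^{e-1}} \equiv 1 \pmod{p^e}$, which is exactly the key congruence, completing the reduction from the first paragraph.

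The step I expect to require the most care is the inductive estimate on the binomial tail, in particular the case $p = 2$, where the coefficient $\binom{p}{2}$ no longer supplies an extra factor of $p$. Reassuringly, the divisibility $kn \ge n+1$ used above comes entirely from the powers of $p$ already present in $p^{kn}$ and does not rely on the integer $\binom{p}{k}$ carrying any $p$, so the argument goes through uniformly for all primes, including $p = 2$; I would nonetheless verify the $p = 2$ boundary explicitly. Conceptually, the content of the key congruence is the standard fact that the order of $g$ modulo $p^e$ divides $m\, p^{e-1}$, and one could alternatively deduce it from the structure of the unit group $(\Z/p^e\Z)^\times$; the direct binomial induction is preferable here since it is self-contained and makes the role of the exponent $m p^{e-1}$ transparent.
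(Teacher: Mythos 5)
Your proof is correct, and while it shares the paper's skeleton---reducing the identity to the single congruence $g^{mp^{e-1}} \equiv 1 \pmod{p^e}$ (valid since the unit factor $g^{x-1+c}$ can be cancelled) and writing $g^m = 1 + pa$---the way you establish that congruence is genuinely different. The paper expands $(1+pa)^{p^{e-1}}$ by the binomial theorem in one shot and argues that every term $\binom{p^{e-1}}{n}(pa)^n$ is divisible by $p^e$; this requires controlling the $p$-adic valuation of $\binom{p^{e-1}}{n}$, and the paper's stated justification (``no multiples of $p^e$ in the denominator'' since $n < p^e$, plus a separate induction showing $p^{e-1} \ge e$ to handle the final term) glosses over the real bookkeeping: $n!$ can carry a large power of $p$ even when $n < p^e$, and what one actually needs is the estimate $v_p\bigl(\binom{p^{e-1}}{n}p^n\bigr) = e - 1 - v_p(n) + n \ge e$, i.e.\ $n - v_p(n) \ge 1$. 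Your iterated-lifting lemma $(1+pa)^{p^n} \equiv 1 \pmod{p^{n+1}}$, proved by induction with one $p$-th power per step, sidesteps all of this: each step involves only $p+1$ binomial terms, and the needed divisibility $kn \ge n+1$ for $k \ge 2$, $n \ge 1$ comes purely from the powers of $p$ already present, with no appeal to the arithmetic of the coefficients $\binom{p}{k}$. This makes your argument shorter to verify rigorously and, as you correctly note, manifestly uniform in $p$ including $p = 2$; the paper's one-shot expansion buys only the cosmetic advantage of avoiding induction, at the cost of a valuation estimate it does not fully carry out.
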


\begin{proof} We want to prove the equivalent statement that $g^{m\cdot p^{e-1}} \equiv 1 \pmod{p^e}$. This is because we are able to reduce the statement by the following process:
\begin{eqnarray*}
f(x, c) &=& f(x, c+m\cdot p^{e-1}) \\
g^{x-1+c} - x &\equiv& g^{x-1+c + m \cdot p^{e-1}} - x \pmod{p^e}\\
g^{x-1+c} &\equiv& g^{x-1+c} \cdot g^{m \cdot p^{e-1}} \pmod{p^e}\\
g^{m \cdot p^{e-1}} &\equiv& 1 \pmod{p^e}
\end{eqnarray*}

Thus, it is sufficient to show that $g^{m \cdot p^{e-1}} \equiv 1 \pmod{p^e}$. By definition, we know that $g^m \equiv 1 \pmod{p}$ so we have $g^m = 1 + p\cdot A$, where $A \in \mathbb{Z}$. Now observe that by the binomial theorem we obtain:
\begin{eqnarray*}
g^{m \cdot p^{e-1}} &\equiv& (g^m)^{p^{e-1} } \pmod{p^e}\\
&\equiv& (1 + p \cdot A)^{p^{e-1}} \pmod{p^e}\\
&\equiv& 1 + p^{e-1}(p \cdot A) + \frac{p^{e-1}(p^{e-1} - 1)}{2!}(p \cdot A)^2 + \\ 
& & \frac{p^{e-1}(p^{e-1} - 1)(p^{e-1} - 2)}{3!}(p \cdot A)^3 +\ldots +\\
&& \frac{p^{e-1}(p^{e-1} - 1)\dots (p^{e-1} - n+1)}{n!}(p \cdot A)^n +\ldots +\\
&&  (pA)^{p^{e-1}} \pmod{p^e}.
\end{eqnarray*}
Consider the $n$th term of this expression:
\begin{eqnarray*}\frac{p^{e-1}(p^{e-1} - 1)\dots (p^{e-1} - n+1)}{n!}(p \cdot A)^n  \\ = p^e \left( \frac{p^{n-1}(p^{e-1} - 1)\dots (p^{e-1} - n+1)}{n!}(A)^n \right)\end{eqnarray*}
In this $n$th term,  for all $1 \leq n \leq p^{e-1}$, we have $p^e > n$. Thus, there are no multiples of $p^e$ in the denominator. Hence, the $n$th term is clearly divisible by $p^e$.

Now observe the last term, $(pA)^{p^{e-1}}$. By induction, we can easily show that $p^{e-1} \geq e$ for all $p \geq 2$, and for all $e \geq 1$. Thus with that result, $(pA)^{p^{e-1}}$ is a multiple of $p^e$, thus  $(pA)^{p^{e-1}} \equiv 0 \pmod{p^e}$.

Since the $n$th term and the last term are all divisible by $p^e$, it is then clear that $g^{m \cdot p^{e-1}} \equiv 1 \pmod{p^e}$.
\end{proof}

\begin{theorem} \label{thm:xrep}
Given a fixed $g$ and $c$, we have that
$$g^{x-1+c+mp^e}-(x+mp^e)  \equiv g^{x-1+c}-x \pmod{p^e}$$
In other words, $f(x,c) = f(x+mp^e,c)$.
\end{theorem}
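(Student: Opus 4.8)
The plan is to reduce this statement to the single congruence $g^{mp^e} \equiv 1 \pmod{p^e}$, in direct analogy with how the proof of Theorem~\ref{thm:repetition} reduced periodicity in $c$ to $g^{mp^{e-1}} \equiv 1 \pmod{p^e}$. First I would expand the shifted function directly: since $f(x+mp^e,c) = g^{(x+mp^e)-1+c} - (x+mp^e)$, I separate the additive and multiplicative contributions of the shift $mp^e$. The additive part is immediate, because $mp^e \equiv 0 \pmod{p^e}$ forces $-(x+mp^e) \equiv -x \pmod{p^e}$.

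For the exponential part, I would factor $g^{x-1+c+mp^e} = g^{x-1+c} \cdot g^{mp^e}$, so that the whole claim collapses to showing $g^{mp^e} \equiv 1 \pmod{p^e}$. Here the hard work has already been done: Theorem~\ref{thm:repetition} establishes the stronger congruence $g^{mp^{e-1}} \equiv 1 \pmod{p^e}$. Since $mp^e = p \cdot (mp^{e-1})$, I can simply raise that congruence to the $p$-th power to obtain $g^{mp^e} = (g^{mp^{e-1}})^p \equiv 1^p = 1 \pmod{p^e}$.

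Combining the two observations gives $g^{x-1+c+mp^e} - (x+mp^e) \equiv g^{x-1+c} \cdot 1 - x = g^{x-1+c} - x \pmod{p^e}$, which is exactly $f(x,c)$. The only genuine content is the congruence $g^{mp^e}\equiv 1\pmod{p^e}$, and because the exponent $mp^e$ is merely a multiple of the exponent $mp^{e-1}$ already handled in Theorem~\ref{thm:repetition}, there is essentially no obstacle left; the main difficulty was discharged by the earlier periodicity argument. Should a self-contained argument be preferred, one could instead rerun the binomial expansion of $(1+pA)^{p^e}$ exactly as in that proof, but invoking the prior result is far cleaner.
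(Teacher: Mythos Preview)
Your proposal is correct and matches the paper's proof essentially line for line: the paper also factors $g^{x-1+c+mp^e} = g^{x-1+c}\,g^{mp^{e-1}\cdot p}$, invokes Theorem~\ref{thm:repetition} to replace $g^{mp^{e-1}}$ by $1$, and notes that $mp^e$ vanishes modulo $p^e$. There is nothing to add.
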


\begin{proof}

From Theorem~\ref{thm:repetition} we now know that $g^{mp^{e-1}} \equiv 1 \pmod{p^e}$, so we have
\begin{eqnarray*}
g^{x-1+c+mp^e}-(x+mp^e) &\equiv& g^{x-1+c}g^{mp^{e-1}p}-x-mp^e \pmod{p^e}\\
&\equiv& g^{x-1+c}(1)^p-x \pmod{p^e}\\
&\equiv& g^{x-1+c}-x\pmod{p^e} 
\end{eqnarray*}
\end{proof}

\section{Characteristics of Welch Equation}\label{characteristics}
\subsection{When $g$ is a primitive root}

\begin{proposition}

Consider $f_g(x,c) = g^{x-1+c}-x \pmod{p^e}$ for odd prime p.
If $g$ is a primitive root mod $p^e$ (i.e. the order of $g$ is $p^{e-1}(p-1)$), there is a $c'$ for each c where
$$f_{g}(x,c)= -f_{g^{-1}}(p^{e+1}-x,c')$$ for all $x \in \Z$.
This corresponding $c'$ is given by $c' \equiv \frac{p^{e-1}(p-3)+4}{2}-c \pmod{p^{e-1}(p-1)}$ and is unique modulo $p^{e-1}(p-1)$.

\end{proposition}

\begin{proof}
To prove this $c'$ satisfies the requirements, it suffices to show $$f_{g}(x,c)+f_{g^{-1}}(p^{e+1}-x,c') \equiv 0 \pmod{p^e}.$$
So we have
\begin{eqnarray*}
g^{x-1+c}&-x+&(g^{-1})^{(p^{e+1}-x)-1+\frac{p^{e-1}(p-3)+4}{2}-c}-(p^{e+1}-x) \pmod{p^e} \\
&\equiv& g^{x-1+c}-x+g^{x-p^{e+1}+1+\frac{3p^{e-1}-p^e-4}{2}+c}+x \pmod{p^e} \\
 &\equiv& g^{x-1+c}(1+g^{-p^{e+1}+\frac{3p^{e-1}-p^e}{2}}) \pmod{p^e} \\
 &\equiv& g^{x-1+c}(1+g^{\frac{p^{e-1}(-2p^2-p+3)}{2}}) \pmod{p^e} \\
&\equiv& g^{x-1+c}(1+g^{\frac{p^{e-1}(p-1)(-2p-3)}{2}}) \pmod{p^e}. \\
\end{eqnarray*}
Since g is a primitive root modulo $p^e$, we know $g^{\frac{p^{e-1}(p-1)}{2}} \equiv -1 \pmod{p^e}$. So the expression we have reduces to:
$$\equiv g^{x-1+c}(1+(-1)^{-2p-3}) \pmod{p^e}.$$
Since $-2p-3$ will always be odd, we have that
$$ \equiv  g^{x-1+c}(1-1) \equiv 0 \pmod{p^e}.$$
Now suppose there is a $c'' \neq c'$ such that 
$$f_{g}(x,c) = -f_{g^{-1}}(p^{e+1}-x,c') = -f_{g^{-1}}(p^{e+1}-x,c'')$$
Then 
\begin{eqnarray*}
(g^{-1})^{(p^{e+1}-x)-1+c'}-(p^{e+1}-x) &\equiv& (g^{-1})^{(p^{e+1}-x)-1+c''}-(p^{e+1}-x) \pmod{p^e}
\end{eqnarray*}
Since $g$ and hence $g^{-1}$ are both primitive roots,
\begin{eqnarray*}
\\(p^{e+1}-x)-1+c' &\equiv&  (p^{e+1}-x)-1+c'' \pmod{p^{e-1}(p-1)}
\\c' &\equiv& c'' \pmod{p^{e-1}(p-1)}.
\end{eqnarray*}
Therefore, $f_{g}(x,c)=-f_{g^{-1}}(p^{e+1}-x,c')$ for a unique $c'$ modulo $p^{e-1}(p-1)$, where 
\\$c' \equiv \frac{p^{e-1}(p-3)+4}{2}-c \pmod{p^{e-1}(p-1)}$.

\end{proof}


\begin{lemma}\label{lem:uniquec} Let $p$ be an odd prime, and $g$ is a primitive root of $p^e$.  Then for each unit $x$ from $1$ to $p^e$, there exists a unique value of $c \in \{1, 2, \dots, (p-1)\cdot p^{e-1}\}$ that is a solution to $g^{x-1+c} \equiv x\pmod{p^e}$.
\end{lemma}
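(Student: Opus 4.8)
The plan is to exploit the fact that a primitive root generates the entire unit group $(\Z/p^e\Z)^\times$, which collapses the congruence $g^{x-1+c}\equiv x$ into a single linear congruence in the exponent. Since $g$ is a primitive root modulo $p^e$, its multiplicative order is $\phi(p^e)=(p-1)p^{e-1}$, so the powers $g^0,g^1,\dots,g^{(p-1)p^{e-1}-1}$ run through each unit modulo $p^e$ exactly once. In particular, for any two integers $a,b$ we have $g^{a}\equiv g^{b}\pmod{p^e}$ if and only if $a\equiv b\pmod{(p-1)p^{e-1}}$; this cancellation property is the one fact I would isolate at the outset.

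First I would fix a unit $x$ with $1\le x\le p^e$ and pass to its discrete logarithm: let $k$ be the unique exponent in $\{0,1,\dots,(p-1)p^{e-1}-1\}$ with $g^{k}\equiv x\pmod{p^e}$, whose existence and uniqueness are guaranteed by the preceding paragraph. Then the Welch equation $g^{x-1+c}\equiv x\pmod{p^e}$ becomes $g^{x-1+c}\equiv g^{k}\pmod{p^e}$, which by the cancellation property is equivalent to the linear congruence
$$x-1+c\equiv k\pmod{(p-1)p^{e-1}},$$
that is, $c\equiv k-x+1\pmod{(p-1)p^{e-1}}$.

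This last congruence has exactly one solution $c$ modulo $(p-1)p^{e-1}$, and hence exactly one representative in the prescribed range $\{1,2,\dots,(p-1)p^{e-1}\}$, since that range is a complete residue system modulo $(p-1)p^{e-1}$. That representative is the desired $c$. Carrying out this argument for every unit $x$ delivers the claimed existence and uniqueness simultaneously.

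The step I expect to require the most care is the justification of the cancellation property, namely pinning down that the order of $g$ modulo $p^e$ is \emph{exactly} $(p-1)p^{e-1}$ and that this is what converts an exponential congruence into a linear one; everything afterward is the routine solution of a single linear congruence. I would also remark that the hypothesis that $x$ be a unit is essential — if $x$ were divisible by $p$ it could never equal a power of the unit $g$ — which is consistent with the paper's standing assumption that $g$, and therefore any solution $x$, is a unit modulo $p$.
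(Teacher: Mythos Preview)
Your argument is correct and follows essentially the same route as the paper's proof: write $x\equiv g^k\pmod{p^e}$, reduce the Welch equation to the exponent congruence $x-1+c\equiv k\pmod{(p-1)p^{e-1}}$, and read off the unique $c\equiv k+1-x$ in the given range. Your version is more carefully worded (spelling out the cancellation property and why the range is a complete residue system), but the underlying idea is identical.
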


\begin{proof} Let $x \equiv g^k \pmod{p^e}$, so we obtain $g^{x-1+c} \equiv g^k \pmod{p^e}$. Now solve for $c$, and we obtain $c \equiv (k+1) - x \pmod{(p-1)\cdot p^{e-1}}$. Hence, we have shown that some unique $c$ exists, since it has to be in the range $1 \leq c \leq m\cdot p^{e-1}$.
\end{proof}

\begin{lemma}\label{lem:c_x}Let $p$ be an odd prime. Let $g$ be a primitive root of both $p$ and $p^e$. Then, when $x = p^e - 1$, we observe that
\begin{center} $c \equiv \frac{p^{e-1}(p-3)+4}{2} \pmod{p^{e-1}(p-1)}$
\end{center}
 is the solution to $g^{x-1+c} \equiv x \pmod{p^e}$.
\end{lemma}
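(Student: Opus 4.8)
The plan is to reduce the claim to Lemma~\ref{lem:uniquec}, which already pins down the exact form of the unique solution $c$ once the discrete logarithm of $x$ is known. Writing $x \equiv g^k \pmod{p^e}$, Lemma~\ref{lem:uniquec} gives $c \equiv (k+1) - x \pmod{(p-1)\cdot p^{e-1}}$, where I have used that $g$ being a primitive root forces its order $m\cdot p^{e-1}$ to equal $(p-1)\cdot p^{e-1}$. So the entire problem collapses to determining $k$ for the specific unit $x = p^e - 1$ and then substituting and simplifying.

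First I would identify $k$. Since $x = p^e - 1 \equiv -1 \pmod{p^e}$, I need the exponent $k$ with $g^k \equiv -1 \pmod{p^e}$. Because $(\Z/p^e\Z)^\times$ is cyclic of order $(p-1)\cdot p^{e-1}$ and $p$ is odd, this order is even, its unique element of order $2$ is $-1$, and the unique residue in $\{1,\dots,(p-1)\cdot p^{e-1}\}$ producing that element is $k = \tfrac{(p-1)\cdot p^{e-1}}{2}$. This is exactly the fact $g^{(p-1)p^{e-1}/2} \equiv -1 \pmod{p^e}$ already invoked in the Proposition above, so I would cite that rather than reprove it.

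Next I would substitute $k = \tfrac{(p-1)\cdot p^{e-1}}{2}$ and $x = p^e - 1$ into $c \equiv (k+1) - x$ and reduce modulo $(p-1)\cdot p^{e-1}$. The raw expression $\tfrac{(p-1)p^{e-1}}{2} + 2 - p^e$ is negative, so the one genuinely fiddly step is to add a single copy of the modulus $(p-1)\cdot p^{e-1}$ and then collect the $p^{e-1}$ terms; this converts $-\tfrac{(p+1)p^{e-1}}{2} + 2$ into $\tfrac{(p-3)p^{e-1}+4}{2}$, which is precisely the claimed value. I expect this bookkeeping — tracking the coefficient of $p^{e-1}$ and the constant term while reducing a negative residue — to be the only place an arithmetic slip could occur; everything else is a direct appeal to the two preceding results.

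Finally, I would note that uniqueness modulo $(p-1)\cdot p^{e-1}$ is inherited directly from Lemma~\ref{lem:uniquec} and requires no separate argument, since the present lemma only asserts that this particular residue solves $g^{x-1+c}\equiv x \pmod{p^e}$, and the substitution above establishes exactly that.
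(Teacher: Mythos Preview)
Your proof is correct but proceeds differently from the paper. The paper gives a direct verification: it substitutes the claimed value of $c$ into the exponent $x-1+c$ with $x=p^e-1$, simplifies to $\tfrac{3p^{e-1}(p-1)}{2}$, and then factors this as $g^{p^{e-1}(p-1)}\cdot g^{p^{e-1}(p-1)/2}\equiv 1\cdot(-1)\equiv p^e-1\pmod{p^e}$. Your argument instead \emph{derives} the formula for $c$ by invoking Lemma~\ref{lem:uniquec}, computing the discrete logarithm $k=\tfrac{(p-1)p^{e-1}}{2}$ of $-1$, and reducing $(k+1)-x$ modulo the group order. Both routes hinge on the same fact $g^{(p-1)p^{e-1}/2}\equiv -1$, but your approach is constructive and explains where the expression $\tfrac{p^{e-1}(p-3)+4}{2}$ comes from, while the paper's verification is shorter and does not depend on having already proved the explicit formula in Lemma~\ref{lem:uniquec}.
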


\begin{proof}
We want to show that $g^{(p^e - 1)-1+\frac{p^{e-1}(p-3)+4}{2}} \equiv p^e - 1 \pmod{p^e}$.  
\begin{eqnarray*}
g^{(p^e - 1)-1+\frac{p^{e-1}(p-3)+4}{2}} &\equiv&  g^{ p^e - 2 + \frac{1}{2} p^{e-1}(p-3) + 2} \pmod{p^e} \\
&\equiv& g^{\frac{2p^e + p^{e-1}(p-3)}{2}} \pmod{p^e}\\
&\equiv& g^{p^{e-1}(p-1)} \cdot g^{\frac{p^{e-1}(p-1)}{2} }\pmod{p^e}\\
&\equiv& (1) \cdot (-1) \pmod{p^e}\\
&\equiv& p^e - 1 \pmod{p^e} \text{ }.
\end{eqnarray*}
\end{proof}

\begin{theorem}
Let $g$ be a primitive root modulo $p^e$, and let $g^{-1}$ be its multiplicative inverse. By Lemma~\ref{lem:uniquec}, there exists a pair ($x$, $c$) that solves $g^{x-1+c} \equiv x \pmod{p^e}$.

Then the pair ($p^e-x$, $c'$) solves $(g^{-1})^{p^e-x-1+c'} \equiv p^e-x \pmod{p^e}$, where \\$c' \equiv c_{p^e-1}-c \pmod{p^{e-1}(p-1)}$, and $c_{p^e-1}=\frac{p^{e-1}(p-3)+4}{2}$.

\end{theorem}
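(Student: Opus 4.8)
The plan is to observe that this statement is essentially a specialization of the Proposition proved above, the only real work being to reconcile the translate $p^{e+1}-x$ appearing there with the translate $p^e-x$ appearing here. First I would rephrase everything in terms of the function $f$. Saying that $(x,c)$ solves $g^{x-1+c}\equiv x \pmod{p^e}$ is exactly the assertion $f_g(x,c)\equiv 0 \pmod{p^e}$, and the conclusion to be proved is exactly that $f_{g^{-1}}(p^e-x,c')\equiv 0 \pmod{p^e}$, where $c'\equiv c_{p^e-1}-c \pmod{p^{e-1}(p-1)}$ and $c_{p^e-1}=\frac{p^{e-1}(p-3)+4}{2}$.

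Next I would invoke the Proposition directly. Its value of $c'$, namely $c'\equiv \frac{p^{e-1}(p-3)+4}{2}-c \pmod{p^{e-1}(p-1)}$, is precisely $c_{p^e-1}-c$, so it matches the $c'$ named in this theorem. The Proposition then furnishes the pointwise identity $f_g(x,c)=-f_{g^{-1}}(p^{e+1}-x,c')$ valid for all $x\in\Z$. Feeding in the hypothesis $f_g(x,c)\equiv 0$, I obtain $f_{g^{-1}}(p^{e+1}-x,c')\equiv 0 \pmod{p^e}$; equivalently, $(p^{e+1}-x,c')$ already solves $(g^{-1})^{(p^{e+1}-x)-1+c'}\equiv p^{e+1}-x \pmod{p^e}$, which is the desired equation but with $p^{e+1}-x$ in place of $p^e-x$.

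The one genuine step is to pass from $p^{e+1}-x$ to $p^e-x$, and here I would appeal to the periodicity in the first variable, Theorem~\ref{thm:xrep}. Because $g$ is a primitive root modulo $p^e$ it is also a primitive root modulo $p$, so its order $m$ modulo $p$ equals $p-1$, and the same holds for $g^{-1}$. Since $p^{e+1}-x=(p^e-x)+(p-1)p^e=(p^e-x)+mp^e$ is exactly one full $x$-period away, Theorem~\ref{thm:xrep} applied to $g^{-1}$ gives $f_{g^{-1}}(p^{e+1}-x,c')=f_{g^{-1}}(p^e-x,c')$. Hence $f_{g^{-1}}(p^e-x,c')\equiv 0 \pmod{p^e}$, which is precisely the claim that $(p^e-x,c')$ solves $(g^{-1})^{p^e-x-1+c'}\equiv p^e-x \pmod{p^e}$, completing the argument.

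The main obstacle is really just this last reconciliation of the two translates: I would be careful to record that $m=p-1$ for both $g$ and $g^{-1}$, so that Theorem~\ref{thm:xrep} genuinely applies to $g^{-1}$, and that $p^{e+1}-p^e$ is exactly the period $mp^e$ rather than some other multiple. As a self-contained alternative that avoids quoting the Proposition, I could verify the congruence head-on: using $g^{x-1+c}\equiv x$ together with $g^{p^{e-1}(p-1)/2}\equiv -1 \pmod{p^e}$ (so that $-x\equiv g^{x-1+c+p^{e-1}(p-1)/2}$), the claim reduces to the exponent congruence $p^e-x-1+c'\equiv -\bigl(x-1+c+p^{e-1}(p-1)/2\bigr) \pmod{p^{e-1}(p-1)}$, which collapses to $0$ after substituting $c'=c_{p^e-1}-c$ and simplifying; however, the route through the Proposition is cleaner and I would present that one.
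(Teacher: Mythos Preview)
Your argument is correct, and it takes a genuinely different route from the paper's. The paper does not invoke the earlier Proposition at all; instead it first uses Lemma~\ref{lem:uniquec} (applied to $g^{-1}$, which is again a primitive root mod $p^e$) to assert that \emph{some} $c'$ exists with $(g^{-1})^{p^e-x-1+c'}\equiv p^e-x\pmod{p^e}$, and then determines this $c'$ by adding the two Welch relations, multiplying through by $g^{p^e-x-1+c'}$, and matching against the explicit identity $p^e-1\equiv g^{\frac{3}{2}p^{e-1}(p-1)}\pmod{p^e}$ from Lemma~\ref{lem:c_x} to solve for $c'$ in the exponent. So the paper's proof is a direct exponent computation that never passes through the translate $p^{e+1}-x$ and never uses the $x$-periodicity Theorem~\ref{thm:xrep}.

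Your approach is arguably tidier: you recycle the Proposition verbatim (same $c'$), and the only new content is the one-line observation that $p^{e+1}-x$ and $p^e-x$ differ by a full $x$-period $mp^e=(p-1)p^e$, so Theorem~\ref{thm:xrep} transports the solution. The cost is that you must justify $m=p-1$ for $g^{-1}$; you do this by noting that a primitive root mod $p^e$ is automatically a primitive root mod $p$ (hence so is its inverse), which is true but perhaps deserves the one-line argument that if $\mathrm{ord}_p(g)=d$ then $g^{dp^{e-1}}\equiv 1\pmod{p^e}$ forces $(p-1)\mid d$. The paper's route avoids that lemma but trades it for more arithmetic; your alternative ``head-on'' verification is essentially the paper's computation run in reverse (verifying the given $c'$ rather than deriving it).
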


\begin{proof} By Lemma ~\ref{lem:uniquec} we know that there exists some $c'$ where $(p^e-x, c')$ is a solution to $(g^{-1})^{p^e-x-1+c'} \equiv p^e - x   \pmod{p^e}$.
Thus, we want to show that $c' \equiv c_{p^e-1} - c$. We know that $g^{x-1+c} \equiv x\pmod{p^e}$ and $g \cdot g^{-1} \equiv 1 \pmod{p^e}$. So we observe $(g^{-1})^{p^e-x-1+c'} \equiv p^e - x \pmod{p^e}$ and obtain:
\begin{align}
	(g^{-1})^{p^e-x-1+c'} &\equiv p^e - x \pmod{p^e}   \nonumber \\
	(g^{-1})^{p^e-x-1+c'} + g^{x-1+c} &\equiv p^e - x + x\pmod{p^e} \nonumber \\
	(g^{-1})^{p^e-x-1+c'} &\equiv -g^{x-1+c} \pmod{p^e} \nonumber \\
	(g\cdot g^{-1})^{p^e-x-1+c'} &\equiv -g^{(x-1+c) + (p^e-x-1+c')} \pmod{p^e} \nonumber \\
	1 &\equiv -g^{-2 + c + p^e + c'} \pmod{p^e} \nonumber \\
	p^e - 1 &\equiv g^{-2 + c + p^e + c'} \pmod{p^e} \label{eqnpe} \text{ } .
\end{align}

Lemma~\ref{lem:c_x} implies that $p^e - 1 \equiv g^{\frac{3p^{e-1}(p-1)}{2}} \pmod{p^e}$ by expanding the lemma as follows:
\begin{align}
	p^e - 1 &\equiv  g^{(p^e - 1)-1+\frac{p^{e-1}(p-3)+4}{2}}  \pmod{p^e} \nonumber \\
		&\equiv g^{\frac{2p^e +p^{e-1}(p-3)}{2}} \pmod{p^e} \nonumber \\
		&\equiv g^{\frac{3p^{e-1}(p-1) }{2} } \pmod{p^e} \label{eqnlem5} \text{ } .
\end{align}

Thus, we can equate (\ref{eqnpe}) and (\ref{eqnlem5}). So we obtain:
\begin{eqnarray*}
g^{\frac{3p^{e-1}(p-1) }{2} }  &\equiv& g^{-2 + c + p^e + c'} \pmod{p^e}\\
\frac{3p^{e-1}(p-1) }{2} &\equiv& -2 + c + p^e + c' \pmod{(p-1) \cdot p^{e-1}}\\
c' &\equiv& \frac{p^{e-1}(p-3)+4}{2} - c \pmod{(p-1) \cdot p^{e-1}} \text{ } .
\end{eqnarray*} 
\end{proof}

\subsection{For more general values of $g$ }
\begin{theorem}\label{thm:x0c0soln}
Let $p$ be an odd prime. Consider $x_0$ a unit. Let $(x_0, c_0)$ be a solution to $g^{x-1+c} \equiv x \pmod{p^e}$. Let $x_0 \equiv x_1 \equiv x_2 \equiv \ldots \equiv x_n \pmod{p^e}$ where 
\begin{eqnarray*}
x_1 &=& x_0 + 1\cdot p^e\\
x_2 &=& x_0 + 2 \cdot p^e\\
&\vdots&\\
x_n &=& x_0 + n \cdot p^e \text{ } .
\end{eqnarray*}
Then, 
\begin{eqnarray*}
c_n &\equiv& c_0 - n\cdot p^{e-1} \pmod{ m\cdot p^{e-1}}
\end{eqnarray*}
is the solution to the equation $g^{x_n-1+c_n} \equiv x_0 \pmod{p^e}$.
\end{theorem}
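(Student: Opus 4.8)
The plan is to substitute the proposed $c_n$ directly into the exponent and reduce, using the assumed solution $(x_0,c_0)$ as an anchor. First I would substitute $x_n = x_0 + n p^e$ and $c_n \equiv c_0 - n p^{e-1}$ into the exponent $x_n - 1 + c_n$, obtaining
\[
x_n - 1 + c_n = (x_0 - 1 + c_0) + n p^e - n p^{e-1}.
\]
I can safely work with $c_n$ only modulo $m p^{e-1}$ inside the exponent, because Theorem~\ref{thm:repetition} shows that the value of $g$ raised to a power depends only on that power modulo $m p^{e-1}$; this is the one bookkeeping point that needs to be flagged explicitly.

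Next I would factor the surplus part of the exponent as $n p^e - n p^{e-1} = n p^{e-1}(p-1)$ and recognize $p^{e-1}(p-1) = \phi(p^e)$, so that
\[
g^{x_n - 1 + c_n} \equiv g^{x_0 - 1 + c_0}\cdot\left(g^{p^{e-1}(p-1)}\right)^{n} \pmod{p^e}.
\]
The crux is then showing $g^{p^{e-1}(p-1)} \equiv 1 \pmod{p^e}$. Since $g$ is a unit modulo $p^e$ and $\phi(p^e) = p^{e-1}(p-1)$, Euler's theorem gives this at once; alternatively, because $m \mid (p-1)$, one can raise the congruence $g^{m p^{e-1}} \equiv 1 \pmod{p^e}$ from Theorem~\ref{thm:repetition} to the power $(p-1)/m$ and reach the same conclusion. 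Either route collapses the extra factor to $1$.

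Finally I would conclude $g^{x_n - 1 + c_n} \equiv g^{x_0 - 1 + c_0} \equiv x_0 \pmod{p^e}$, invoking the hypothesis that $(x_0,c_0)$ solves the Welch equation; since $x_n \equiv x_0 \pmod{p^e}$ this is exactly the claimed identity. I do not expect a genuine obstacle here, as the argument is essentially a one-line exponent computation. The only subtlety worth care is the justification that reducing $c_n$ modulo $m p^{e-1}$ does not change $g$'s contribution, which is precisely the content of Theorem~\ref{thm:repetition}.
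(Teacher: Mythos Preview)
Your proposal is correct and follows essentially the same route as the paper: substitute $x_n$ and $c_n$, collect the exponent as $(x_0-1+c_0)+n p^{e-1}(p-1)$, and kill the extra factor using $g^{p^{e-1}(p-1)}\equiv 1\pmod{p^e}$. Your write-up is in fact a bit more careful than the paper's, which simply displays the three-line computation without explicitly justifying the reduction of $c_n$ modulo $m p^{e-1}$ or the vanishing of $g^{n p^{e-1}(p-1)}$.
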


\begin{proof}
\begin{eqnarray*}
g^{x_n -1 + c_n} &\equiv& g^{ (x_0 + n \cdot p^e) - 1 + (c_0 - n \cdot p^{e-1})} \pmod{p^e}\\
&\equiv& g^{x_0 - 1 + c_0} \cdot g^{n\cdot p^{e-1} (p-1)} \pmod{p^e}\\
&\equiv& x_0 \pmod{p^e}
\end{eqnarray*}
\end{proof}

From Theorem~\ref{thm:x0c0soln} we explicitly highlight the case when we take $n = m$, and obtain the following corollary. We note that the statement is similar to Theorem~\ref{thm:repetition} except that this corollary highlights the particular $(x_0, c_0)$ pair that produces a repeated solution.

\begin{corollary}\label{corol:rptc}
Let $p$ be an odd prime. Let $(x_0, c_0)$ be a solution pair to $g^{x-1+c} \equiv x \pmod{p^e}$. Let $x_{m} = x_0 + m\cdot{p^e}$. Then, 
\begin{center}$c_m \equiv c_0 - m \cdot p^e \equiv c_0 \pmod{m\cdot p^{e-1}}$\end{center}
is the solution to the equation $g^{x_m - 1 + c_m} \equiv x_0 \pmod{p^e}$.
\end{corollary}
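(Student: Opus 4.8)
The plan is to treat this as the promised specialization of Theorem~\ref{thm:x0c0soln} to $n = m$, and in fact to hand it an even more transparent direct verification. The starting point is simply that $(x_0, c_0)$ is a solution, so $g^{x_0 - 1 + c_0} \equiv x_0 \pmod{p^e}$; everything then reduces to tracking how the exponent of $g$ changes when we pass to $x_m = x_0 + m p^e$ and the corresponding $c_m$.

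First I would substitute $n = m$ directly into Theorem~\ref{thm:x0c0soln}. This yields $x_m = x_0 + m p^e$ together with $c_m \equiv c_0 - m p^{e-1} \pmod{m p^{e-1}}$, and the verification carried out there rests on $g^{m p^{e-1}(p-1)} \equiv 1 \pmod{p^e}$. I would justify that congruence from Theorem~\ref{thm:repetition}: since $g^{m p^{e-1}} \equiv 1 \pmod{p^e}$, raising to the $(p-1)$ power gives $g^{m p^{e-1}(p-1)} \equiv 1 \pmod{p^e}$ with no further work. Alternatively, and more cleanly, I would take the representative $c_m = c_0 - m p^e$ appearing in the statement and check the equation outright: the exponent then collapses to $(x_0 + m p^e) - 1 + (c_0 - m p^e) = x_0 - 1 + c_0$, so $g^{x_m - 1 + c_m} = g^{x_0 - 1 + c_0} \equiv x_0 \pmod{p^e}$, appealing to no lemma at all.

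The only point needing care---and the step I would treat as the crux---is reconciling the two expressions for $c_m$ and confirming the final congruence $c_m \equiv c_0 \pmod{m p^{e-1}}$. I would observe that $c_0 - m p^e$ and $c_0 - m p^{e-1}$ differ by $m p^{e-1}(p-1)$, a multiple of $m p^{e-1}$, so they name the same class modulo $m p^{e-1}$; and since $m p^e = p \cdot (m p^{e-1})$ is itself a multiple of $m p^{e-1}$, both representatives collapse to $c_0 \pmod{m p^{e-1}}$. This reduction is precisely the content of the displayed congruence chain in the statement, and it is the reason the "new" solution $(x_m, c_m)$ is not genuinely new: it merely repeats $c_0$ within the fundamental range of period $m p^{e-1}$, consistent with the periodicity already established in Theorem~\ref{thm:repetition}. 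Since the whole argument is bookkeeping on the exponent of $g$ and one modulus reduction, I expect no real obstacle beyond keeping the factors of $p$ straight.
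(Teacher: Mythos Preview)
Your proposal is correct and takes essentially the same approach as the paper: the paper offers no separate proof environment for this corollary, deriving it simply as the case $n = m$ of Theorem~\ref{thm:x0c0soln}. You supply more detail than the paper does---in particular the direct verification with the representative $c_m = c_0 - m p^e$ and the reconciliation of representatives modulo $m p^{e-1}$---but the underlying idea is identical.
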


\subsection{Observing $x \equiv p \pmod{p}$}\label{hintnonprg}
\begin{proposition}\label{x=p}For all $c$ and $g \in \Z$, when $x \equiv p \pmod{p}$, $g^{x-1+c} \not\equiv x \pmod{p}$.
\end{proposition}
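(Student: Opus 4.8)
The plan is to exploit the sharp dichotomy between the two sides of the congruence modulo $p$: the right-hand side $x$ collapses to $0$, while the left-hand side $g^{x-1+c}$ is forced to be a unit. First I would rewrite the hypothesis $x \equiv p \pmod{p}$ in the cleaner equivalent form $x \equiv 0 \pmod{p}$, since $p \equiv 0 \pmod{p}$. Thus $p \mid x$, and in particular $x$ is not a unit modulo $p$.

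Next I would turn to the left-hand side. By the standing assumption of the paper, $g$ is a unit modulo $p$, so $g$ lies in the multiplicative group $(\Z/p\Z)^\times$. Every integer power of a group element is again in that group, so $g^{x-1+c}$ is a unit modulo $p$ for any exponent $x-1+c$, whether it is positive, zero, or negative. In particular $g^{x-1+c} \not\equiv 0 \pmod{p}$.

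Finally I would combine the two observations. If we had $g^{x-1+c} \equiv x \pmod{p}$, then since $x \equiv 0 \pmod{p}$ we would obtain $g^{x-1+c} \equiv 0 \pmod{p}$, contradicting the fact that $g^{x-1+c}$ is a unit. Hence $g^{x-1+c} \not\equiv x \pmod{p}$, as claimed, and the argument is uniform in both $c$ and $g$.

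The only real subtlety, and the point I would be most careful about, is the indispensable role of the hypothesis that $g$ is a unit. If instead $p \mid g$, the statement fails outright, since for a positive exponent both sides vanish modulo $p$; so the argument genuinely rests on the standing assumption that $g$ is a unit, which is precisely what guarantees that no power of $g$ can be divisible by $p$. I would also make explicit that passing to the group $(\Z/p\Z)^\times$ is what legitimizes treating $g^{x-1+c}$ as a well-defined unit regardless of the sign of the exponent, so that no separate case analysis on $x-1+c$ is needed.
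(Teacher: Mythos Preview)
Your argument is correct and is essentially the same as the paper's: both observe that $x\equiv p\equiv 0\pmod p$ while any power of the unit $g$ is nonzero modulo $p$. Your version is simply more explicit about the role of the standing assumption that $g$ is a unit, which the paper's one-line proof leaves implicit.
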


\begin{proof}It is easy to show why Proposition ~\ref{x=p} holds. If $x = p$, then $x \equiv 0 \pmod{p}$. However, $g^k$ can never be equal or congruent to 0. 
\end{proof}
When $x \equiv p \pmod{p}$ we get an interesting result. When we create the function $f(x, c) = g^{x-1+c} - x \pmod{p}$, and let $c$ range from $1$ to $m$ \footnote{We use this range because for a fixed $x$, the value of $f(x, c)$ starts to repeat when $c > m$. See Theorem~\ref{thm:repetition}. }, we obtain a value set, $V = \{f(x, c) \mid 1\leq c \leq m\}$. When $x \equiv p \pmod{p}$, the value set clues us in to the exact values of $x$ which have solutions to the Welch equation. This result is particularly useful to look in the case where $g$ is not a primitive root as not all values of $x$ will be a solution. Additionally, the size of the value set gives us insight into the number of solutions to modulo $p$ for the range of $x \in \{1, 2, \ldots, p\}$ . Hence, we have the following theorems.

\begin{lemma}\label{lem:vset}
Let $p$ be an odd prime. Let $f$ be a function defined by $f(x, c) = g^{x-1+c} - x \pmod{p}$. When $x = p$, then $f(p, c) \equiv g^c \pmod{p}$ for all $1 \leq c \leq m$.
\end{lemma}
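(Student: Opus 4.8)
The plan is to substitute $x = p$ directly into the definition of $f$ and simplify modulo $p$. First I would observe that since $x = p \equiv 0 \pmod{p}$, the subtracted term $-x$ contributes nothing modulo $p$, so that $f(p, c) \equiv g^{p-1+c} \pmod{p}$. This immediately reduces the entire statement to showing the single congruence $g^{p-1+c} \equiv g^c \pmod{p}$.

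The key step is then to split the exponent as $g^{p-1+c} = g^{p-1} \cdot g^c$ and invoke Fermat's Little Theorem. Because the paper assumes throughout that $g$ is a unit modulo $p$, Fermat's Little Theorem applies and gives $g^{p-1} \equiv 1 \pmod{p}$. Substituting this in yields $g^{p-1+c} \equiv 1 \cdot g^c \equiv g^c \pmod{p}$, which is exactly the claim.

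There is no serious obstacle here: the result falls out at once from the vanishing of $p$ modulo $p$ together with Fermat's Little Theorem. The restriction $1 \le c \le m$ plays no essential role in the argument itself and is stated only to match the range used elsewhere when tracking the value set $V$. The one hypothesis worth flagging explicitly is that $g$ is a unit, since this is precisely what licenses the use of Fermat's Little Theorem; it is guaranteed by the standing assumptions of the paper.
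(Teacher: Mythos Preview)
Your proposal is correct and essentially identical to the paper's own proof: both drop the $-p$ term modulo $p$, factor $g^{p-1+c}=g^{p-1}\cdot g^c$, and apply Fermat's Little Theorem to conclude. Your additional remarks about the role of the hypothesis that $g$ is a unit and the inessentiality of the range $1\le c\le m$ are accurate observations not spelled out in the paper.
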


\begin{proof} All we need to show is that $g^c \equiv g^{p-1+c} - p \pmod{p}$. This is fairly simple. Start with $LHS: g^{p-1+c} - p \equiv g^{p-1} \cdot g^c \equiv (1) \cdot g^c \equiv g^c \pmod{p}$.
\end{proof}

\begin{theorem}\label{thm:vset}Let $p$ be an odd prime, and fix $g$ a unit modulo $p$. Consider any $x \in \{ g^{p-1+c} - p \mid 1 \leq c \leq m\}$.
Then a solution $c'$ exists, which solves $g^{x-1+c'} \equiv x \pmod{p}$. Furthermore, this solution is unique modulo $m$.
\end{theorem}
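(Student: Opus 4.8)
The plan is to recognize that membership of $x$ in the value set $V = \{g^{p-1+c}-p \mid 1 \le c \le m\}$ is exactly the statement that $x$ is congruent modulo $p$ to a power of $g$, so that producing $c'$ amounts to solving a discrete logarithm whose solvability is already guaranteed. First I would apply Lemma~\ref{lem:vset}, which gives $g^{p-1+c} - p \equiv g^{c} \pmod{p}$; hence for the chosen $x$ there is some integer $c''$ with $1 \le c'' \le m$ for which $x \equiv g^{c''} \pmod{p}$. In other words, identifying $V$ with the cyclic subgroup generated by $g$ is the whole conceptual content of the statement.

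Next I would substitute this into the target equation. Since $x$ is a fixed integer, the exponent $x-1+c'$ is an ordinary integer, and the congruence $g^{x-1+c'} \equiv x \pmod{p}$ becomes $g^{x-1+c'} \equiv g^{c''} \pmod{p}$. The key tool is that $g$ has multiplicative order $m$ modulo $p$, so two powers of $g$ agree modulo $p$ precisely when their exponents agree modulo $m$; this requires first noting that $x$ is a unit modulo $p$, which holds because $x$ is congruent to a power of the unit $g$. The problem then reduces to the linear congruence $x - 1 + c' \equiv c'' \pmod{m}$.

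Finally I would solve this congruence, obtaining $c' \equiv c'' - x + 1 \pmod{m}$. Because the coefficient of $c'$ is $1$, a solution always exists and is determined uniquely modulo $m$, which yields both the existence and the uniqueness assertions at once. The step requiring the most care is the reduction of exponents modulo $m$: one must justify passage from equality of powers to congruence of exponents using $\ord_p(g)=m$, and must keep track that the index $c''$ furnished by Lemma~\ref{lem:vset} is a legitimate discrete logarithm of $x$. Once $V$ is seen as $\langle g\rangle$ inside $(\Z/p\Z)^\times$, everything else is a routine manipulation of a linear congruence in one variable.
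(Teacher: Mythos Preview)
Your proposal is correct and follows essentially the same route as the paper: both arguments invoke Lemma~\ref{lem:vset} to identify any $x$ in the value set with a power $g^{c''}$ modulo $p$, then reduce $g^{x-1+c'}\equiv g^{c''}\pmod p$ to the linear congruence $x-1+c'\equiv c''\pmod m$ via $\ord_p(g)=m$, and read off $c'\equiv c''-x+1\pmod m$. Your version is in fact slightly cleaner, since the paper's displayed formula $c'\equiv -x+1-c$ contains a sign slip (it should be $c'\equiv -x+1+c$), though the intended argument is the same.
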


The statement of this theorem may be a little confusing so we give an example before providing the proof. It is best to consider a $g$ that is not a primitive root, as this case best highlights the point of the theorem.

\begin{example}
Consider $p = 7$, and $g = 2$. The multiplicative order of 2 modulo 7 is 3. When $x = 7$ we obtain:
\begin{align}
c = 1 :&\text{  } 2^{7-1+1} - 7 \equiv 2^7 - 7 \equiv 128 - 7 \equiv 2 \pmod{7} \nonumber \\
c = 2 :&\text{  } 2^{7-1+2} - 7 \equiv 2^8 - 7 \equiv 256 - 7 \equiv 4 \pmod{7} \nonumber \\
c = 3 :&\text{  } 2^{7-1+3} - 7 \equiv 2^9 - 7 \equiv 512 - 7 \equiv 1 \pmod{7} \nonumber
\end{align}
So the value set, $V = \{1, 2, 4\}$. Check $f(x, c) = g^{x-1+c} - x \pmod{p}$ for $x \in \{1, 2, 3, 4, 5, 6, 7\}$, and for $c \in \{1, 2, 3\}$. When $f(x, c) \equiv 0 \pmod{p}$, we know that this pair $(x, c)$ is a solution to the equation $g^{x-1+c} \equiv x \pmod{p}$.


\begin{table}[h]
\begin{center}
\begin{tabular}{| l | c c c|}
\hline
\textbf{x} & \textbf{c = 1} & \textbf{c = 2} & \textbf{c = 3}\\
\hline
\textbf{1} & 1 & 3 & 0\\
\textbf{2} & 2 & 6 & 0\\
\textbf{3} & 5 & 6 & 1\\
\textbf{4} & 5 & 0 & 4\\
\textbf{5} & 6 & 3 & 4\\
\textbf{6} & 2 & 3 & 5\\
\textbf{7} & 1 & 2 & 4\\
\hline
\end{tabular}
\caption{Values of $f(x, c)$}\label{tablevset}
\end{center}
\end{table}
As we can see in Table~\ref{tablevset}, the values of $x$ which have solutions are the same as those in the value set. 
\end{example}

\begin{proof}
By Lemma~\ref{lem:uniquec} we know that if $x$ is a solution, there will exist a corresponding $c$. Let $c'$ be the solution to $g^{x-1+c} \equiv x \pmod{p}$ for $x \equiv g^{p-1+c} - p \pmod{p}$. So we have the following equivalences:
	\begin{align*}
	g^{x-1+c'} &\equiv x \pmod{p} \\
	g^{x-1+c'} &\equiv  g^{p-1+c} - p \pmod{p}\\
	x-1 + c' &\equiv c \pmod{m} \\
	c' &\equiv  -x + 1 - c \pmod{m}
	\end{align*}

Now we show that $c'$ solves $g^{p-1 + c } \equiv x \pmod{p}$. 
 
\begin{align*}
g^{x-1+c'} = g^{x-1 + (-x + 1- c)} \equiv g^c \equiv g^c\cdot g^{p-1} - p \equiv g^{p-1+c} - p \equiv x \pmod{p}
\end{align*}

\end{proof}

\subsection{Symmetry in multiplicative inverses}
\begin{proposition}\label{prop:backwdxp}
Let $p$ be an odd prime. If $g^{-1}$ is the multiplicative inverse of $g$ modulo $p$, then when $x = p$, we have $g^{p-1+c} \equiv (g^{-1})^{p-1+ (m-c)} \pmod{p}$.
\end{proposition}

\begin{proof}	
\begin{eqnarray*}
(g^{-1})^{p-1 + (m-c)} 
&\equiv& g^{-(p-1 + (m-c))} \pmod{p} \\
&\equiv& g^{-(p-1)} \cdot g^{-m} \cdot g^c \pmod{p} \\
&\equiv& (1) \cdot (1) \cdot g^c \pmod{p}\\
&\equiv& g^{p-1+c} \pmod{p},
\end{eqnarray*}
where the last equivalence follows from Lemma ~\ref{lem:vset}.
\end{proof}

\begin{theorem}
Let $p$ be an odd prime. Let $f$ be the function $f_g(p,c) = g^{p-1+c} - p \pmod{p}$. When $g$ and $g^{-1}$ are multiplicative inverses we observe that the value sets produced are equal, such that $\{f_g(p,c) \mid 1 \leq c \leq m \} = \{ f_{g^{-1}}(p,c) \mid 1 \leq c \leq m\}$.
\end{theorem}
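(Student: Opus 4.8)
The plan is to show the two value sets coincide as sets by exhibiting a bijection between their index ranges that matches outputs. The natural candidate is the map $c \mapsto m - c$ on the index set, which is the same correspondence that appears in Proposition~\ref{prop:backwdxp}. Indeed, Proposition~\ref{prop:backwdxp} already does almost all of the work: it tells us that for each $c$,
$$
f_g(p,c) = g^{p-1+c} - p \equiv (g^{-1})^{p-1+(m-c)} - p = f_{g^{-1}}(p, m-c) \pmod{p},
$$
where I have subtracted $p$ from both sides of the proposition's congruence to pass from the discrete-exponential form back to the function $f$. So each element $f_g(p,c)$ of the left-hand value set equals some element $f_{g^{-1}}(p, m-c)$ of the right-hand value set.

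The key steps, in order, are as follows. First I would fix $c$ with $1 \le c \le m$ and apply Proposition~\ref{prop:backwdxp} to rewrite $f_g(p,c)$ in terms of $g^{-1}$, subtracting $p$ as above; this shows every element of the left set lies in the right set. Second, I would check that the index $m-c$ stays in the correct range so that it genuinely indexes an element of the right-hand value set. Third, I would argue the reverse inclusion, which follows by symmetry: since $(g^{-1})^{-1} = g$ and $g^{-1}$ is itself a unit of the same multiplicative order $m$ modulo $p$, the identical argument applied to $g^{-1}$ in place of $g$ shows every element of the right set lies in the left set. Together these two inclusions give set equality.

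The main obstacle is a bookkeeping issue with the index range rather than anything deep: as $c$ runs over $\{1, 2, \dots, m\}$, the quantity $m - c$ runs over $\{0, 1, \dots, m-1\}$, which is shifted off the stated range $1 \le c \le m$ by the boundary value. I would resolve this using the periodicity established in Theorem~\ref{thm:repetition}, which guarantees $f_{g^{-1}}(p, 0) \equiv f_{g^{-1}}(p, m)$, so the index $0$ may be replaced by $m$ without changing the value; equivalently, one works with indices modulo $m$ and notes that $\{m-c \bmod m : 1 \le c \le m\}$ is exactly $\{1, 2, \dots, m\}$ as a set of representatives. Once this modular wrap-around is handled, the correspondence $c \mapsto m-c$ is a bijection of the index set onto itself, the outputs match termwise, and the equality of value sets follows immediately.
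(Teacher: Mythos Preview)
Your proposal is correct and follows essentially the same route as the paper: both proofs reduce immediately to Proposition~\ref{prop:backwdxp}, which gives $f_g(p,c) \equiv f_{g^{-1}}(p,m-c)$, and then conclude equality of the value sets. The paper's proof is in fact terser than yours---it simply invokes the proposition and declares the value sets equal without addressing the index-range wrap-around or the reverse inclusion---so your additional bookkeeping (handling $m-c=0$ via periodicity, and noting that $g^{-1}$ has the same order $m$) fills in details the paper leaves implicit.
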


\begin{proof}
By Proposition~\ref{prop:backwdxp}, we know that $g^{p-1+c} \equiv  (g^{-1})^{p-1+ (m-c)} \pmod{p}$. Hence, if we consider $f_g(p,c)$ and take it over $1 \leq c \leq m$, we know that the value set will be equal to the value set of $f_{g^{-1}}(p,c)$.
\end{proof}


\subsection{Other patterns}
\begin{proposition}
Consider $f(x,c) \equiv g^{x-1+c}-x \pmod{p^e}$. Then
$$f(x+y, c) \equiv f(x, c+y)-y \pmod{p^e}.$$
\end{proposition}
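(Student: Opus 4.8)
The plan is to prove this identity by directly substituting the definition of $f$ into both sides and checking that they reduce to the same expression. The observation driving the proof is that in the exponent $x - 1 + c$, the variables $x$ and $c$ enter symmetrically, so shifting $x$ by $y$ or shifting $c$ by $y$ produces exactly the same exponent; the two sides can therefore differ only in the standalone linear term ($-(x+y)$ versus $-x$), and that discrepancy is precisely what the extra $-y$ on the right-hand side corrects.

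Concretely, I would first expand the left-hand side, writing $f(x+y,c) = g^{(x+y)-1+c} - (x+y) = g^{x-1+c+y} - x - y \pmod{p^e}$. Then I would expand the right-hand side as $f(x,c+y) - y = \bigl(g^{x-1+(c+y)} - x\bigr) - y = g^{x-1+c+y} - x - y \pmod{p^e}$. Comparing the two, both collapse to $g^{x-1+c+y} - x - y$, which establishes the congruence (indeed the equality, even before any reduction is performed).

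The hard part will be essentially nonexistent: this is a purely formal manipulation with no arithmetic content beyond combining like terms in the exponent and the constant part. No appeal to periodicity, Hensel's lemma, or the multiplicative order of $g$ is needed, since the identity holds at the level of integers prior to reduction modulo $p^e$. The only point worth stating carefully is that the integer exponents $(x+y)-1+c$ and $x-1+(c+y)$ are literally equal, so that no interpolation step and no unit or primitive-root hypothesis on $g$ is required for the argument to go through.
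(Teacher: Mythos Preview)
Your proof is correct and is essentially the same as the paper's: both amount to the direct computation that $g^{(x+y)-1+c}-(x+y)$ and $g^{x-1+(c+y)}-x-y$ are literally equal, with the paper phrasing it as showing the difference is zero while you expand each side separately.
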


\begin{proof}
It suffices to show $f(x+y, c) - (f(x,c+y)-y) \equiv 0 \pmod{p^e}$.
We have 
\begin{eqnarray*}
g^{(x+y)-1+c}&-&(x+y)-(g^{x-1+c+y}-x-y) \\
&\equiv& g^{x+y-1+c}-g^{x+y-1+c}-x-y+x+y \pmod{p^e}\\
&\equiv& 0 \pmod{p^e}
\end{eqnarray*}
\end{proof}

\begin{proposition}
Consider $f(\cdot, c) \equiv g^{x-1+c}-x \pmod{p^e}$. Then
$$f(x, c) \equiv f(x+p^{e-1}(p-1),c)-p^{e-1} \pmod{p^e}.$$
\end{proposition}

\begin{proof}
It suffices to show $f(x,c) - f(x+p^{e-1}(p-1),c)+p^{e-1} \equiv 0 \pmod{p^e}.$
The left hand side becomes
\begin{equation*}
g^{x-1+c} - x -(g^{x+p^{e-1}(p-1)-1+c}-x-p^{e-1}(p-1)-p^{e-1}) 
\end{equation*}
Since the order of $g$ divides $p^{e-1}(p-1)$, this is congruent modulo $p^e$ to
\begin{eqnarray*}
 g^{x-1+c}&-&x-g^{x-1+c}+x+p^{e-1}(p) \pmod{p^e}\\
&\equiv& p^e \pmod{p^e}\\
&\equiv& 0 \pmod{p^e}.
\end{eqnarray*}
\end{proof}

\section{Interpolation $\&$ Hensel's Lemma}\label{interpolation}
\subsection{When $p$ is an odd prime}

Let $g \in \Z$ be fixed and let $p$ be an odd prime.  
We will need to interpolate the function $f(\cdot, c) = g^{x-1+c}$,
which is defined on $x \in \Z$, to a function on $x \in \Zp$
so that we can count solutions to $g^{x-1+c} \equiv x \pmod{p^e}$.
However, this is not possible for $g \notin 1+p\Zp$. 
(See for example, \cite[Section 4.6]{gouvea}, or \cite[Section II.2]{koblitz}.)
So we will have to change the function slightly in order to interpolate.

To do this, we use methods similar to that of \cite[Section 2]{hold_rob}.
Let $\mu_{p-1} \subseteq \Zp^\times$
be the set of all $(p-1)$-st roots of unity.  Then for odd prime $p$, we have the
Teichm\"uller character
$$\omega: \Zp^\times \to \mu_{p-1},$$
which is a surjective homomorphism.
As stated in \cite[Cor. 4.5.10]{gouvea},
we can write each element of $\Zp^\times$
as an element of  $\mu_{p-1} \times (1+p\Zp)$. 
So for each $x \in \Z_p^\times$ as $\Z_p^\times \cong \mu_{p-1} \times (1+p\Zp)$  we have $x = \omega(x) \oneunit{x}$ for some
$\oneunit{x} \in 1 + p\Zp$.

\begin{theorem}
\label{gouvea-interp}
For $p \neq 2$, let $g \in \Zp^\times$ and $x_0 \in \Z/(p-1)\Z$, and let
$$I_{x_0} = \set{x \in \Z \mid x-1+c \equiv x_0
  \pmod{p-1}} \subseteq \Z.$$
Then $$F_{x_0}(x) = \omega(g)^{x_0}\oneunit{g}^{x-1+c}$$ defines a uniformly continuous function
on $\Zp$ such that $F_{x_0}(x) = g^{x-1+c}$ whenever $x\in I_{x_0}$.
\end{theorem}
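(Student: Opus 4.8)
The plan is to prove the two claims of Theorem~\ref{gouvea-interp} in sequence: first that $F_{x_0}$ agrees with $g^{x-1+c}$ on the arithmetic progression $I_{x_0}$, and second that $F_{x_0}$ is uniformly continuous on $\Zp$. The key idea is that the decomposition $g = \omega(g)\oneunit{g}$ separates $g$ into a root-of-unity part and a one-unit part, and that the function $\oneunit{g}^{x-1+c}$ can be interpolated continuously in the exponent precisely because $\oneunit{g} \in 1+p\Zp$, whereas $\omega(g)^{x-1+c}$ cannot (it only depends on the exponent modulo $p-1$, so we freeze it by replacing the exponent with the fixed residue $x_0$).

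First I would establish the agreement on $I_{x_0}$. Take any $x \in I_{x_0}$, so that $x-1+c \equiv x_0 \pmod{p-1}$. Using $g = \omega(g)\oneunit{g}$, I would write
\begin{equation*}
g^{x-1+c} = \omega(g)^{x-1+c}\oneunit{g}^{x-1+c}.
\end{equation*}
Because $\omega(g)$ is a $(p-1)$-st root of unity, $\omega(g)^{x-1+c}$ depends only on the exponent modulo $p-1$, and since $x-1+c \equiv x_0 \pmod{p-1}$ we get $\omega(g)^{x-1+c} = \omega(g)^{x_0}$. Substituting this gives exactly $F_{x_0}(x) = \omega(g)^{x_0}\oneunit{g}^{x-1+c}$, as required. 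This step is essentially a bookkeeping calculation once the two factors are separated.

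The main obstacle, and the real content of the theorem, is the uniform continuity of $F_{x_0}$ on $\Zp$. Since $\omega(g)^{x_0}$ is a fixed constant, it suffices to show that $x \mapsto \oneunit{g}^{x-1+c}$ extends to a uniformly continuous function on $\Zp$. Here I would invoke the standard fact (from the cited \cite[Section 4.6]{gouvea} or \cite[Section II.2]{koblitz}) that for $u \in 1+p\Zp$ the map $n \mapsto u^n$, originally defined for $n \in \Z$, extends continuously to $n \in \Zp$; the crux is the estimate that if $n \equiv n' \pmod{p^k}$ then $u^n \equiv u^{n'} \pmod{p^{k+1}}$, which follows from $u^{p^k} \in 1+p^{k+1}\Zp$ (a consequence of the binomial expansion, exactly as in the proof of Theorem~\ref{thm:repetition}). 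This estimate gives the quantitative modulus of continuity that makes the extension \emph{uniformly} continuous, not merely continuous. Composing with the continuous (indeed affine) map $x \mapsto x-1+c$ on $\Zp$ preserves uniform continuity, and multiplying by the constant $\omega(g)^{x_0}$ does as well, completing the argument.

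I expect the agreement claim to be routine, while the uniform continuity claim is where care is needed: the whole reason for introducing the Teichm\"uller decomposition and restricting to $I_{x_0}$ is that the $\mu_{p-1}$-part of $g$ obstructs interpolation, so the argument must make clear that only the $\oneunit{g}$-part is being interpolated and that freezing the $\omega(g)$-part to the constant $\omega(g)^{x_0}$ is both necessary and sufficient to recover the original values on $I_{x_0}$.
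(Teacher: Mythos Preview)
Your proposal is correct, and the core estimate you invoke---that $u^{p^k}\in 1+p^{k+1}\Zp$ for $u\in 1+p\Zp$, giving the modulus of uniform continuity for $n\mapsto u^n$---is exactly the engine of the paper's proof as well. The packaging, however, differs. The paper treats $F_{x_0}$ as a priori defined only on the arithmetic progression $I_{x_0}$ (where it agrees with $g^{x-1+c}$), then proves explicitly that $I_{x_0}$ is dense in $\Zp$ and that $F_{x_0}$ is uniformly continuous on $I_{x_0}$, so that Gouv\^ea's extension principle yields the function on all of $\Zp$. You instead observe that the formula $\omega(g)^{x_0}\oneunit{g}^{x-1+c}$ already makes sense for every $x\in\Zp$ once one cites the standard interpolation of $y\mapsto\oneunit{g}^y$ as a black box, so no density or extension step is needed; agreement on $I_{x_0}$ is then just the bookkeeping you describe. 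Your route is shorter and cleaner, at the cost of relying on an external reference for the interpolation of one-unit powers; the paper's route is more self-contained and makes the density of $I_{x_0}$ explicit, which is conceptually useful (it explains why restricting to a single residue class modulo $p-1$ loses nothing $p$-adically) even if it is not logically required by your argument.
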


\begin{proof}
By ~\cite[Proposition 4.6.1]{gouvea}, we know that we need
 $I_{x_0}$ to be dense in $\Z_p$ and for each $F_{x_0}(x)$ be uniformly continuous and bounded.
By ~\cite[Theorem 4.1.4]{katok}, we know that if a function is continuous in $\Z_p$, it is also uniformly continuous and bounded.
Thus, it suffices to show density of $I_{x_0}$, continuity of each $F_{x_0}$ as a function on $I_{x_0}$, and that $F_{x_0}(x)=g^{x-1+c}$ with the proper conditions on $x$.

We first prove density of $I_{x_0}$ in $\Z_p$: for the sake of clarity, we rename $I_{x_0}$ as $I_{s_0} = \set{s \in \Z \mid s-1+c \equiv s_0 \pmod{p-1}}$ in this proof.
Let $x \in \Z_p$, so it can be written as $x = x_0 + x_1p+x_2p^2 + \ldots$. Now let
 $s_1 \in \{0, 1, \ldots, p-2\}$ such that $s_1 \equiv s_0+1-c-x_0-x_1- \ldots -x_{i-1} \pmod{p-1}$
and $y_i=x_0+x_1p+\ldots+x_{i-1}p^{i-1}+s_1p^i$ for any given $i \in \Z$. Then we know 
$$y_i \equiv x_0+x_1+x_2+\cdots+x_{i-1}+s_1  \equiv s_0+1-c \pmod{p-1}$$ so $y_i \in I_{s_0}$ for all $y_i$.
Now let $\epsilon>0$, and we can find $N$ such that $p^{-N}<\epsilon$. For any $n>N$, we have
\begin{eqnarray*}
\lvert y_n-x \rvert_p &=& \lvert x_0+x_1p+\ldots+x_{n-1}p^{n-1}+s_1p^n-(x_0+x_1p+x_2p^2+\ldots)\rvert_p
\\ &=& \lvert s_1p^n-(x_np^n+x_{n+1}p^{n+1}+\ldots)\rvert_p
\\ &=& \lvert p^n \rvert_p \lvert s_1 - (x_n+n_{n+1}p+ \ldots)\rvert_p
\\ &\leq& p^{-n} <p^{-N} < \epsilon.
\end{eqnarray*}
So for every $x \in \Z_p$ we have a sequence $\{y_i\}$ in $I_{s_0}$ that converges to $x$, so then $I_{s_0}$
 (which was the new notation for our original set $I_{x_0}$) is dense in $\Z_p$.

Now we must show each $F_{x_0}(x) = \omega(g)^{x_0}\oneunit{g}^{x-1+c}$ is uniformly continuous on $I_{x_0}$.
Given $\epsilon>0$, find $N$ such that $p^{-N}<\epsilon$.
 Now if $x,y \in I_{x_0}$ such that 
$$\lvert x-y \rvert_p \leq p^{-N} < p^{-(N-1)} = \delta,$$ then $x=y+p^NA$ for some $A \in \Z$.
 Consider
$$\lvert\oneunit{g}^x-\oneunit{g}^y\rvert_p = \lvert\oneunit{g}^{y+p^NA}-\oneunit{g}^y\rvert_p =
 \lvert\oneunit{g}^y\rvert_p \lvert\oneunit{g}^{p^NA}-1\rvert_p = \lvert\oneunit{g}^{p^NA}-1\rvert_p$$
and using the binomial theorem for some $M \in \Z$, we get
$$\oneunit{g}^{p^NA} = (1+pM)^{p^NA} = 1+p^NApM+\frac{p^NA(p^NA-1)}{2}(pM)^2+\ldots+(pM)^{p^NA}.$$
Because all terms except for the first are in $p^{N+1}\Z_p$, we see that 
$$\lvert\oneunit{g}^{p^NA}-1\rvert_p \leq p^{-(N+1)} < p^{-N} < \epsilon.$$
So the function mapping $x \rightarrow \oneunit{g}^x$ is uniformly continuous on $I_{x_0}$ and hence on $\Z_p$ by ~\cite[Thm 4.15]{katok}.
Since each $F_{x_0}(x) = \omega(g)^{x_0}\oneunit{g}^{x-1+c} = \omega(g)^{x_0}\oneunit{g}^{c-1}\oneunit{g}^x$
for fixed $x_0, c$, and $g$,  and $ \omega(g)^{x_0}\oneunit{g}^{c-1}$ is a constant, we have that $F_{x_0}(x)$ is a constant 
times a uniformly continuous function. Hence, each $F_{x_0}(x)$ is uniformly continuous on $\Z_p$ (see ~\cite[Exercise 89]{katok}).

Lastly, we show that $F_{x_0}(x) = g^{x-1+c}$ when $x \in I_{x_0}$.
Since $x-1+c \equiv x_0 \pmod{p-1}$, we have that
 $$g^{x-1+c} = \omega(g)^{x-1+c}\oneunit{g}^{x-1+c} = \omega(g)^{x_0}\oneunit{g}^{x-1+c} = F_{x_0}(x).$$

\end{proof}

We can extend this theorem to multiples of the order of $g$ modulo $p$:

\begin{theorem}
  For this theorem, we let $m$ be any multiple of the multiplicative order of $g$ modulo
  $p$, $p \neq 2$, so that $m \mid p-1$.
  Let $g \in \Zp^\times$ and $x_0 \in \Z/m\Z$, and let
$$I_{x_0} = \set{x \in \Z \mid x-1+c \equiv x_0
  \pmod{m}} \subseteq \Z.$$
Then $$F_{x_0}(x) = \omega(g)^{x_0}\oneunit{g}^{x-1+c}$$ defines a uniformly continuous function
on $\Zp$ such that $F_{x_0}(x) = g^{x-1+c}$ whenever $x\in I_{x_0}$.
\end{theorem}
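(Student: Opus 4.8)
The plan is to follow the proof of Theorem~\ref{gouvea-interp} almost verbatim, replacing $p-1$ by $m$ throughout, and to isolate the two places where the hypotheses $m \mid p-1$ and ``$m$ a multiple of the order of $g$'' actually get used. As before, by \cite[Proposition 4.6.1]{gouvea} and \cite[Theorem 4.1.4]{katok} it suffices to prove three things: that $I_{x_0}$ is dense in $\Z_p$, that each $F_{x_0}$ is uniformly continuous on $I_{x_0}$, and that $F_{x_0}(x) = g^{x-1+c}$ whenever $x \in I_{x_0}$.

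For density I would reuse the digit-adjustment argument: given $x = x_0 + x_1 p + x_2 p^2 + \cdots \in \Z_p$, set $y_i = x_0 + x_1 p + \cdots + x_{i-1}p^{i-1} + s_1 p^i$, where $s_1 \in \{0, 1, \dots, p-1\}$ is chosen with the correct residue modulo $m$ to force $y_i \in I_{x_0}$. The one point that must be rechecked is that this choice is possible modulo $m$ rather than modulo $p-1$. Here the hypothesis $m \mid p-1$ does double duty: it gives $p \equiv 1 \pmod m$, so that $y_i \equiv x_0 + x_1 + \cdots + x_{i-1} + s_1 \pmod m$ exactly as in the previous proof, and it gives $m \le p-1 < p$, so the admissible digit range $\{0,\dots,p-1\}$ surjects onto $\Z/m\Z$ and a suitable $s_1$ exists. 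The estimate $\lvert y_n - x\rvert_p \le p^{-n}$ is unchanged, so $y_i \to x$ and $I_{x_0}$ is dense.

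Uniform continuity requires no new work. The map $x \mapsto \oneunit{g}^x$ is uniformly continuous on $\Z_p$ by exactly the binomial-theorem estimate used in Theorem~\ref{gouvea-interp}, an argument that never refers to the modulus defining $I_{x_0}$; and each $F_{x_0}$ is the fixed constant $\omega(g)^{x_0}\oneunit{g}^{c-1}$ times this map, hence uniformly continuous as well by \cite[Exercise 89]{katok}.

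The step where the second hypothesis enters, and the one I expect to need the most care, is the evaluation $F_{x_0}(x) = g^{x-1+c}$ on $I_{x_0}$. Writing $g^{x-1+c} = \omega(g)^{x-1+c}\oneunit{g}^{x-1+c}$, it remains to show $\omega(g)^{x-1+c} = \omega(g)^{x_0}$, and since $x-1+c \equiv x_0 \pmod m$ this reduces to $\omega(g)^m = 1$. In Theorem~\ref{gouvea-interp} the analogous identity $\omega(g)^{p-1}=1$ was automatic from $\omega(g) \in \mu_{p-1}$; here it is precisely the content of the hypothesis that $m$ is a multiple of the order of $g$ modulo $p$. Indeed, reduction modulo $p$ carries $\mu_{p-1}$ isomorphically onto $(\Z/p\Z)^\times$ and sends $\omega(g)$ to $g$, so the order of $\omega(g)$ equals the order of $g$ modulo $p$ and hence divides $m$. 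With $\omega(g)^m = 1$ established the evaluation follows, completing the three required verifications.
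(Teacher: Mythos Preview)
Your argument is correct, but it takes a different route from the paper. You rerun the entire proof of Theorem~\ref{gouvea-interp} with $m$ in place of $p-1$, pinpointing where $m \mid p-1$ (density) and where $m$ is a multiple of the order of $g$ (the identity $\omega(g)^m = 1$) are actually needed. The paper instead uses Theorem~\ref{gouvea-interp} as a black box: it first establishes $\omega(g)^m = \omega(g^m) = \omega(1) = 1$ via the multiplicativity of $\omega$, and then simply observes that if $x_0, x_0' \in \Z/(p-1)\Z$ satisfy $x_0 \equiv x_0' \pmod m$, the two functions $F_{x_0}$ and $F_{x_0'}$ already furnished by Theorem~\ref{gouvea-interp} coincide. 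Since the new $I_{x_0}$ (defined mod $m$) is the union of the old $I_{x_0'}$ (defined mod $p-1$) over all such $x_0'$, the common function agrees with $g^{x-1+c}$ on each piece and hence on the whole, and its uniform continuity is inherited for free. The paper's approach is shorter and avoids revisiting the density and continuity estimates; yours has the virtue of making completely explicit which hypothesis is responsible for which step, and in particular shows directly that the density argument survives the passage from $p-1$ to any divisor $m$.
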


\begin{proof}
Since $g^m \equiv 1 \pmod{p}$, $\omega(g)^m = \omega(g^m) = \omega(1) = 1.$ 
If $x_0, x_0' \in \Z/(p-1)\Z$ and $ x_0 \equiv x_0' \pmod{m}$,
then the two functions $F_{x_0}$ and $F_{x_0'}$ given by
Theorem~\ref{gouvea-interp} are equal and are the same as $g^{x-1+c}$ when
$x \in I_{x_0} \cup  I_{x_0'}$.
\end{proof}

\subsection{When $p$ is equal to 2}
In the case that $p=2$, we use the same ideas to interpolate the function $f(\cdot, c) = g^{x-1+c}-x \pmod{p^e}$ a little differently. 
We will still decompose $g \in \Z_2^\times$ as $\omega(g)\oneunit{g}$, but now we take
$\omega(g) \in \{-1, 1\}$ and $\oneunit{g} \in 1+4\Z_2$. It is known that this factorization
exists and is unique (see ~\cite[Corollary 4.5.10]{gouvea}).

Now we will show how we can form two new functions by interpolation when $p=2$.

\begin{theorem}
\label{p2 interp}
For $p = 2$, let $g \in \Z_2^\times$ and $c \in \Z$.
Then $F_0(x) = \oneunit{g}^{x-1+c}$ and
$F_1(x) =  -\oneunit{g}^{x-1+c}$
 define functions
on  $1+2\Z_2$ such that $F_0(x) = g^{x-1+c}$ either when
  $g \in 1+4\Z_2$ or when $g \in 3+4\Z_2$ and $x-1+c \equiv 0 \pmod{2}$
and $F_1(x) = g^{x-1+c}$ when $g \in 3+4\Z_2$ and $x-1+c \equiv 1 \pmod{2}$.
\end{theorem}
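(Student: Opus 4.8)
The plan is to treat this as the $p=2$ analogue of Theorem~\ref{gouvea-interp}, where the only genuinely new feature is that the Teichm\"uller-type character now takes values in $\{-1,1\}$ (order $2$) rather than being trivial, forcing \emph{two} interpolation functions distinguished by the parity of the exponent. I would split the argument into two parts: first, establishing that $\oneunit{g}^{x-1+c}$ is a well-defined continuous function on $1+2\Z_2$; second, verifying the claimed identities with $g^{x-1+c}$ by a short case analysis on the class of $g$ modulo $4$.

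For the first part I would mimic the density-plus-uniform-continuity argument used in Theorem~\ref{gouvea-interp}. The odd integers are dense in $1+2\Z_2$, so it suffices to show that $x \mapsto \oneunit{g}^x$ is uniformly continuous on $\Z$ in the $2$-adic metric and then extend by density. Writing $\oneunit{g} = 1 + 4M$ with $M \in \Z_2$ (legitimate since $\oneunit{g} \in 1+4\Z_2$), and supposing $|x - y|_2 \le 2^{-N}$ so that $x = y + 2^N A$ with $A \in \Z$, I would expand
\[
\oneunit{g}^{x} - \oneunit{g}^y = \oneunit{g}^y\bigl(\oneunit{g}^{2^N A} - 1\bigr)
\]
by the binomial theorem and bound the $2$-adic valuation of each term of $\oneunit{g}^{2^N A}-1$. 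Using that the valuation of $\binom{2^N A}{k}$ is at least $N - v_2(k)$ together with $v_2((4M)^k) \ge 2k$, the $k$-th term has valuation at least $N + (2k - v_2(k)) \ge N+2$, so $|\oneunit{g}^{x}-\oneunit{g}^y|_2 \le 2^{-(N+2)}$, giving uniform continuity. Continuity of $F_0$ and $F_1$ then follows since each is the constant $\pm\,\oneunit{g}^{c-1}$ times $\oneunit{g}^{x}$.

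For the second part I would invoke the decomposition $g = \omega(g)\oneunit{g}$ with $\omega(g) \in \{-1,1\}$, so that for integer $x$,
\[
g^{x-1+c} = \omega(g)^{x-1+c}\,\oneunit{g}^{x-1+c}.
\]
If $g \in 1+4\Z_2$ then $\omega(g) = 1$ and the right-hand side is $\oneunit{g}^{x-1+c} = F_0(x)$, regardless of the parity of $x-1+c$. If $g \in 3+4\Z_2$ then $\omega(g) = -1$, so the sign $(-1)^{x-1+c}$ equals $+1$ exactly when $x-1+c \equiv 0 \pmod 2$, yielding $F_0(x)$, and equals $-1$ when $x-1+c \equiv 1 \pmod 2$, yielding $F_1(x)$. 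This matches the three claimed cases exactly.

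The main obstacle I anticipate is the first part: getting the valuation bookkeeping right for $p=2$. Unlike the odd-prime case, here one genuinely needs $\oneunit{g} \in 1+4\Z_2$ rather than merely $\oneunit{g} \in 1+2\Z_2$ --- it is precisely the extra factor of $2$ in $\oneunit{g}-1$ that forces $\oneunit{g}^{2^N A} \equiv 1 \pmod{2^{N+2}}$ and hence makes the exponential interpolate at all. By contrast, the sign analysis in the second part is routine once the factorization $g = \omega(g)\oneunit{g}$ is in hand.
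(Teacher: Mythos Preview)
Your proposal is correct and follows essentially the same approach as the paper: density of $1+2\Z$ in $1+2\Z_2$, uniform continuity of $x\mapsto\oneunit{g}^x$ via a binomial expansion of $(1+4M)^{2^NA}$, and then the case split on $\omega(g)\in\{\pm1\}$ and the parity of $x-1+c$. Your valuation bookkeeping is in fact slightly sharper (you obtain $2^{-(N+2)}$ where the paper is content with $2^{-(N+1)}$), but the structure and ideas are identical.
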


\begin{proof}
As with the proof for odd primes (see Theorem~\ref{gouvea-interp}), it suffices to show that $1+2\Z$ is dense in $1+2\Z_2$,
each function is uniformly continuous on $1+2\Z$, and the functions agree with $g^{x-1+c}$ for the 
proper conditions on $x$.

The density of $1+2\Z$ in $1+2\Z_2$ is simple to show. For any $x \in 1+2\Z_2$,
$$x = 1 + a_1(2)+a_2(2^2)+a_3(2^3)+ \ldots \quad \mbox{where } a_i \in \{0, 1\}.$$ 
We let $\{y_n\}$ be the sequence defined by $y_n = x \pmod{2^n}$.
Since each $y_n \in 1+2\Z$ and $\{y_n\}$ converges to $x$, we know 
that $1+2\Z$ is dense in the set $1 + 2\Z_2$.

Next,  we show that each function is uniformly continuous on $1+2\Z$.
Let $\epsilon > 0$. We take $N$ such that $\frac{1}{2^N}<\epsilon$.
Now let $x,y \in 2\Z+1$, and take $\delta > 0$ such that
 $$\lvert x-y \rvert_2 \leq \frac{1}{2^N} < 2^{-(N-1)}=\delta$$
Then $x-y \in 2^N\Z$, so $x=y+2^NA$ for some $A \in \Z$. So
$$ \lvert \oneunit{g}^y-\oneunit{g}^x\rvert_2 = \lvert \oneunit{g}^y-\oneunit{g}^{y+2^NA}\rvert_2 
= \lvert \oneunit{g}^y\rvert_2  \lvert 1-\oneunit{g}^{2^NA}\rvert_2 = \lvert \oneunit{g}^{2^NA}-1\rvert_2 $$
Notice that for $g=1+4M$ with $M \in Z_2$ we have 
$$\oneunit{g}^{2^NA} = (1+4M)^{2^NA} = 1+2^NA(4M)+\frac{2^NA(2^NA-1)}{2}(4M)^2+\ldots+(4M)^{2^NA}$$
and all terms except the first are in $2^{N+1}\Z_2$.
So $\lvert\oneunit{g}^{2^NA}-1\rvert_2 \leq 2^{-(N+1)} < 2^{-N}< \epsilon.$
\\So $\oneunit{g}^x$ is uniformly continuous on $1+2\Z$.
Since $\oneunit{g}^{-1+c}$ and  $-\oneunit{g}^{-1+c}$ are constants, both $F_0(x)$ and $F_1(x)$ 
are uniformly continuous on $1+2\Z_2$ as well by interpolation.

Now we have that for fixed $c \in \Z$ and $g \in \Z_2^\times$,
 $g^{x-1+c}=\omega(g)^{x-1+c}\oneunit{g}^{x-1+c}$. 
 If $g \in 1+4\Z_2$ then 
$g^{x-1+c}=1^{x-1+c}\oneunit{g}^{x-1+c}=\oneunit{g}^{x-1+c}$.
If $g \in 3+4\Z_2$, then 
$g^{x-1+c}=(-1)^{x-1+c}\oneunit{g}^{x-1+c}$. Since $(-1)^2=1$, we have two cases.
Suppose $x-1+c \equiv 0 \pmod{2}$. Then $(-1)^{x-1+c}=1$ and $g^{x-1+c}=\oneunit{g}^{x-1+c}$.
If $x-1+c \equiv1 \pmod{2}$, then $(-1)^{x-1+c}=-1$ and $g^{x-1+c}=-\oneunit{g}^{x-1+c}$.
So we have two equations for $f(\cdot, c)=g^{x-1+c}$. For $g \in 1+4\Z_2$ or $g \in 3+4\Z_2$ and $x-1+c \equiv 0 \pmod{2}$
$$ F_0(x) = \oneunit{g}^{x-1+c}$$
Otherwise (when $g \in 3+4\Z_2$ and $x-1+c \equiv 1 \pmod{2}$),
$$ F_1(x) = -\oneunit{g}^{x-1+c}.$$

\end{proof}

We have thus shown that we can interpolate our function $f(\cdot, c) \equiv g^{x-1+c} - x \pmod{p^e}$. Since our goal is to count solutions, we find that we can form a power series to conduct further analysis of the function, where we use Hensel's lemma to discuss how our function is able to ``lift" from solutions  modulo $p$ to solutions modulo $p^e$. We use the generalizations of Hensel's lemma from~\cite[Section 3]{hold_rob} in our analysis. The reader will see use of Hensel's lemma throughout Section~\ref{numofsoln} in finding the number of solutions modulo $p^e$ from those modulo $p$.

\section{Counting solutions}\label{numofsoln}
\subsection{Treating $c$ as a fixed constant}

In the proof of the following theorem, we use the $p$-adic $\log$ and $\exp$ functions. They are defined as follows: 

$$\exp(x) = \sum\limits_{n=0}^{\infty}\frac{x^n}{n!}, \hspace{5 mm} \lvert x \rvert_p < p^{-1/(p-1)}
$$
$$ \log(1+x) = \sum\limits_{n=1}^{\infty}\frac{(-1)^{n+1}x^n}{n}, \hspace{5 mm} \lvert x \rvert_p < 1$$ 

It is important to note that for $x$ where everything is defined, $\exp(\log(1+x))=1+x$ and $\log(\exp(x))=x$. For more on these functions, see ~\cite[Section 4.5]{gouvea}.

\begin{theorem} \label{x0fixedpt} For $p \neq 2$, let $g \in \Zp^\times$ be fixed (and again let $m$ be the
  multiplicative order of $g$ modulo $p$).
  Then for each $x_0 \in \Z/m\Z$, there is exactly one
  solution to the equation
$$\omega(g)^{x_0} \oneunit{g}^{x-1+c} = x $$
for $x \in \Zp$.
\end{theorem}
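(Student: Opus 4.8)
The plan is to rewrite the equation as a single $p$-adic analytic equation and solve it by a contraction (fixed-point) argument, with Hensel's lemma available as an alternative. For each fixed $x_0 \in \Z/m\Z$, write $L = \log\oneunit{g}$. Since $\oneunit{g} \in 1 + p\Zp$, the series for $\log$ converges and $L \in p\Zp$; because $p$ is odd we have $\lvert L \rvert_p \le p^{-1} < p^{-1/(p-1)}$, so $\exp$ converges on $p\Zp$ and $\oneunit{g}^{x} = \exp(xL)$ is defined for every $x \in \Zp$. Collecting the factors that do not depend on $x$ into the unit $K = \omega(g)^{x_0}\oneunit{g}^{c-1} \in \Zp^\times$, the equation becomes
$$K\exp(xL) = x.$$

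First I would observe that $\phi(x) = K\exp(xL)$ maps $\Zp$ into $\Zp^\times \subseteq \Zp$, since $\exp(xL) \in 1 + p\Zp$ and $K$ is a unit. Then for $x, y \in \Zp$ I would estimate
$$\lvert \phi(x) - \phi(y) \rvert_p = \lvert K \rvert_p\, \lvert \exp(yL)\rvert_p\, \lvert \exp((x-y)L) - 1\rvert_p = \lvert \exp((x-y)L) - 1\rvert_p.$$
Using that $\lvert \exp(z) - 1\rvert_p = \lvert z\rvert_p$ for $z \in p\Zp$ (valid because $p$ is odd), this equals $\lvert (x-y)L\rvert_p \le p^{-1}\lvert x-y\rvert_p$. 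Hence $\phi$ is a contraction with factor at most $p^{-1} < 1$ on the complete metric space $\Zp$, and the Banach fixed-point theorem yields exactly one $x \in \Zp$ with $\phi(x) = x$, which is precisely the unique solution claimed.

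Alternatively, and more in keeping with the Hensel's lemma approach flagged earlier, I would set $h(x) = K\exp(xL) - x$, a convergent power series on $\Zp$. Reducing modulo $p$ gives $h(x) \equiv K - x \pmod{p\Zp}$ (since $\exp(xL) \equiv 1$), so any solution must satisfy $x \equiv K \pmod{p}$ and there is a unique such residue class. Differentiating, $h'(x) = KL\exp(xL) - 1 \equiv -1 \pmod{p\Zp}$ is a unit, so $\lvert h(x_0)\rvert_p \le p^{-1} < 1 = \lvert h'(x_0)\rvert_p^2$ for any $x_0 \equiv K \pmod{p}$; Hensel's lemma then lifts this to a unique root in $\Zp$, and because every root lies in the single class $x \equiv K \pmod{p}$ this root is unique in all of $\Zp$.

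The main obstacle is the analytic bookkeeping rather than any deep idea: one must confirm that $\log\oneunit{g} \in p\Zp$ and, crucially, that $\exp$ converges on $p\Zp$ — this is exactly where the hypothesis $p \neq 2$ is used, since for $p = 2$ one only has convergence on $4\Z_2$ and the sign $\omega(g) = \pm 1$ must be treated separately (compare Theorem~\ref{p2 interp}). The estimate $\lvert \exp(z) - 1\rvert_p = \lvert z\rvert_p$ on $p\Zp$ and the computation that $h'$ is a unit are the two facts that make both the contraction constant and the Hensel condition come out strictly favorable; the degenerate case $\oneunit{g} = 1$ (so $L = 0$) is harmless, as then $\phi$ is constant and $x = K$ is visibly the unique solution.
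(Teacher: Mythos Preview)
Your proposal is correct, and your ``alternative'' Hensel argument is essentially the paper's own proof: the paper expands $\oneunit{g}^{x-1+c}$ via $\exp$ and $\log$, observes that modulo $p$ the equation collapses to $\omega(g)^{x_0}\equiv x$, checks that the derivative of the resulting power series is $\equiv -1\pmod p$, and invokes the power-series form of Hensel's lemma from \cite[Cor.~3.3]{hold_rob}.

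Your primary argument via the Banach fixed-point theorem is a genuinely different route. By packaging the equation as $x=\phi(x)$ with $\phi(x)=K\exp(xL)$ and using $\lvert\exp z-1\rvert_p=\lvert z\rvert_p$ on $p\Zp$, you get a clean contraction constant $p^{-1}$ and bypass any appeal to Hensel's lemma for power series; completeness of $\Zp$ alone suffices. This is slightly more self-contained and makes the role of the hypothesis $p\neq 2$ (convergence of $\exp$ on $p\Zp$) completely transparent. The paper's Hensel approach, on the other hand, fits more naturally with the lifting machinery used throughout Section~\ref{numofsoln} (in particular the multivariable Hensel in Proposition~\ref{prop:hlnoc}) and yields directly the mod-$p$ description $x\equiv\omega(g)^{x_0}$ of the solution, which feeds into the Chinese Remainder arguments that follow. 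Either method dispatches the degenerate case $\oneunit{g}=1$ trivially, as you noted.
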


\begin{proof} 
Similarly to the proof found in ~\cite[Section 4]{hold_rob},
we start by finding solutions modulo $p$.  Since
  $\oneunit{g} \equiv 1 \pmod{p}$, we now have
$$\omega(g)^{x_0} \equiv x \pmod{p}.$$
We fixed $g$ and $x_0$, so this clearly has exactly one solution.

Since
  $\oneunit{g}$ is in $1+p\Zp$, we get
\begin{eqnarray*}
\oneunit{g}^{x-1+c}=\oneunit{g}^{c-1}\oneunit{g}^x&=&
\oneunit{g}^{c-1}(\exp(x \log(\oneunit{g})) \\ = \ 
\oneunit{g}^{c-1}\  (1&+&x\log(\oneunit{g})+
x^2\log( \oneunit{g})^2/2! \\
&+& \mbox{higher order terms in powers of }
\log(\oneunit{g}))
\end{eqnarray*}
where from ~\cite[Proposition 4.5.9]{gouvea},  we know that 
$\log(\oneunit{g}) \in p\Zp$. Now that we have a convergent
power series since $|\log(\oneunit{g})^i/i!|_p \to 0$ as $i \to \infty$ (see ~\cite[Chapter 2, Thm 3.1]{bachman}), we 
examine $f(\cdot,c)=F_{x_0}(x)-x$ and its derivative to see if we can apply a generalization of Hensel's lemma.

We let $a \in \{0, 1, \ldots, p-1\}$ such that $a \equiv \omega(g)^{x_0} \pmod{p}$ and let
\begin{eqnarray*}
f(\cdot,c)=\omega(g)^{x_0}\oneunit{g}^{c-1}(1&+&x\log(\oneunit{g})+
x^2\log( \oneunit{g})^2/2! \\
&+& \mbox{higher order terms in powers of }
\log(\oneunit{g})) - x
\end{eqnarray*}

Since we know $\log{(\oneunit{g})}\in p\Zp$, so $\log{(\oneunit{g})} \equiv 0 \pmod{p}$, we have that
\begin{eqnarray*}
f(a,c)\equiv\omega(g)^{x_0}(1)(1&+&a(0)+
a^2(0) \\
&+& \mbox{higher order terms congruent to 0 (mod p) }
) -  a \pmod{p}
\end{eqnarray*}
\begin{eqnarray*}
\equiv\omega(g)^{x_0}-a
\equiv 0 \pmod{p}
\end{eqnarray*}
And we also have that
\begin{eqnarray*}
f'(\cdot,c)=(\omega(g)^{x_0}\oneunit{g}^{c-1})(\log{(\oneunit{g})}+x\log{(\oneunit{g})}^2+x^2\log{(\oneunit{g})^3/2!}+...)-1
\end{eqnarray*}
so that
\begin{eqnarray*}
f'(a,c)=(\omega(g)^{x_0}\oneunit{g}^{c-1})(\log{(\oneunit{g})}+a\log{(\oneunit{g})}^2+...)-1 \equiv -1 \not\equiv 0 \pmod{p}.
\end{eqnarray*}
which is also convergent (see \cite[Proposition 4.4.4]{gouvea}).
Now we know we can apply a generalization of Hensel's lemma (see ~\cite[Cor. 3.3]{hold_rob}), 
 which states that there is a unique $x \in \Z_p$ for which $x \equiv a \pmod{p}$ and $f(x,c)=0$ in $\Z_p$.

\end{proof}

\begin{corollary} \label{crtfixedpt} For odd prime $p$, let $g \in \Z$ be fixed where $p \nmid g$.
  Then there are exactly $m$ solutions to the congruence
\begin{equation*}
g^{x-1+c} \equiv x \pmod{p^e} 
\end{equation*}
for $x \in \{1,2, \ldots, p^em \}$.  These solutions  are also all distinct modulo $m$ and relatively prime to $p$.
\end{corollary}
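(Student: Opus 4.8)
The plan is to convert the congruence $g^{x-1+c}\equiv x \pmod{p^e}$ into the interpolated setting of Theorem~\ref{x0fixedpt} and then transport the resulting $p$-adic solutions back to residues modulo $p^e m$ by the Chinese Remainder Theorem. Recall that an integer $g$ with $p \nmid g$ lies in $\Zp^\times$, and that since $m \mid p-1$ we have $\gcd(m, p^e) = 1$. For an integer $x$, set $x_0 \equiv x - 1 + c \pmod{m}$; then $x \in I_{x_0}$, so by the extension of Theorem~\ref{gouvea-interp} to multiples of $m$ we have $g^{x-1+c} = F_{x_0}(x) = \omega(g)^{x_0}\oneunit{g}^{x-1+c}$. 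Hence an integer $x$ solves the Welch congruence modulo $p^e$ exactly when $F_{x_0}(x) \equiv x \pmod{p^e}$, which lets me analyze each residue class $x_0 \in \Z/m\Z$ separately.

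First I would fix $x_0 \in \Z/m\Z$ and invoke Theorem~\ref{x0fixedpt} to obtain the unique $\xi_{x_0} \in \Zp$ with $F_{x_0}(\xi_{x_0}) = \xi_{x_0}$. The proof of that theorem applies a generalization of Hensel's lemma whose derivative condition $f'(\cdot,c) \equiv -1 \not\equiv 0 \pmod p$ holds everywhere; consequently the reduction $\xi_{x_0} \bmod p^e$ is the unique solution of $F_{x_0}(x) \equiv x \pmod{p^e}$ lying above the single mod-$p$ solution $\omega(g)^{x_0} \equiv x \pmod p$. Thus, within the class $x_0$, the congruence $F_{x_0}(x) \equiv x \pmod{p^e}$ cuts out exactly one residue modulo $p^e$. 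Combining this with the defining condition $x \equiv x_0 + 1 - c \pmod{m}$ (needed so that $x \in I_{x_0}$ and $F_{x_0}(x)$ genuinely equals $g^{x-1+c}$), the Chinese Remainder Theorem produces a unique residue modulo $p^e m$. I would then verify it really is a solution: the uniform continuity estimate for $\oneunit{g}^x$ from Theorem~\ref{gouvea-interp} shows that $x \equiv \xi_{x_0} \pmod{p^e}$ forces $F_{x_0}(x) \equiv F_{x_0}(\xi_{x_0}) = \xi_{x_0} \equiv x \pmod{p^e}$, so that $g^{x-1+c} = F_{x_0}(x) \equiv x \pmod{p^e}$.

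Finally I would assemble the count. The assignment $x \mapsto x_0 := (x-1+c) \bmod m$ sends each solution modulo $p^e m$ to an element of $\Z/m\Z$, and the CRT construction above is its two-sided inverse, giving a bijection between $\Z/m\Z$ and the solution classes modulo $p^e m$; since Theorem~\ref{thm:xrep} guarantees $f$ has period $m p^e$ in $x$, each class has a unique representative in $\{1,\ldots,p^e m\}$, so there are exactly $m$ solutions. Because distinct $x_0$ force distinct residues $x \equiv x_0 + 1 - c \pmod m$, the solutions are distinct modulo $m$; and reducing $g^{x-1+c}\equiv x$ modulo $p$ exhibits $x$ as a power of the unit $g$, hence $p \nmid x$. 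I expect the main obstacle to be the bookkeeping in the second paragraph: carefully justifying that the unit-derivative behavior makes ``reduce the $p$-adic fixed point modulo $p^e$'' coincide with ``solve modulo $p^e$,'' and keeping the $p$-adic condition (mod $p^e$) cleanly separated from the combinatorial condition (mod $m$) so that the Chinese Remainder Theorem applies without double counting.
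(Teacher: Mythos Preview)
Your proposal is correct and follows essentially the same route as the paper: for each $x_0 \in \Z/m\Z$ use Theorem~\ref{x0fixedpt} (via Hensel) to produce a unique residue modulo $p^e$, then combine with the congruence $x \equiv x_0 + 1 - c \pmod m$ by the Chinese Remainder Theorem to obtain exactly one solution in $\{1,\dots,p^e m\}$ per choice of $x_0$. Your write-up is in fact more careful than the paper's in checking that the CRT output really is a solution, in articulating the bijection, and in deriving the ``distinct modulo $m$'' and ``prime to $p$'' claims.
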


\begin{proof}
Theorem~\ref{x0fixedpt} implies that for each $x_0 \in \Z/m\Z$ there is exactly one
$x_1 \in \Z/p^e\Z$ 
where
$$\omega(g)^{x_0} \oneunit{g}^{x_1-1+c} \equiv x_1 \pmod {p^e}. $$
The Chinese Remainder Theorem states that
there will be exactly one  $x \in \Z/p^em\Z$ where
${x-1+c} \equiv x_0 \pmod m$ and $x \equiv x_1 \pmod {p^e}$. Since $x-1+c \equiv x_0 \pmod m$, we know that
for this $x$:
$$g^{x-1+c} =\omega(g)^{x_0} \oneunit{g}^{x-1+c} \equiv x \pmod {p^e}. $$
Since for each $x_0$ there is exactly one such $x$, we have exactly $m$ solutions to the congruence.
Note that, as stated in the introduction, $x$ must be a unit modulo $p$ since $g$ is a unit, and thus all solutions $x$ are relatively prime to $p$.

\end{proof}

Furthermore, we can extend our knowledge of the number of solutions to a larger range of $x$.
\begin{proposition}
 For an odd prime $p$, let $g \in \Z$ be fixed such that $p \nmid g$, and let $k \in \Z$.
  Then there are exactly $km$ solutions to the congruence
\begin{equation}
g^{x-1+c} \equiv x \pmod{p^e} 
\end{equation}
for $x \in \{1,2, \ldots, p^ekm \}$, each $x$ relatively prime to $p$.
Note that the order of $g$ modulo $p$ must always divide $p-1$, so when $k= \frac{p-1}{m}$ there are $p-1$ solutions for $x \in \{1, 2, \ldots, (p-1)p^e\}$.
\end{proposition}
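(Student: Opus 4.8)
The plan is to leverage the periodicity of $f(x,c)$ in the variable $x$ established in Theorem~\ref{thm:xrep}, which states $f(x,c) = f(x + mp^e, c)$, together with the exact count of $m$ solutions on a single period provided by Corollary~\ref{crtfixedpt}. The central observation is that the range $\{1, 2, \ldots, p^e km\}$ decomposes into $k$ consecutive blocks of length $mp^e$, namely the blocks $B_j = \{(j-1)mp^e + 1, \ldots, j \cdot mp^e\}$ for $j = 1, \ldots, k$, and the periodicity forces each block to carry the same number of solutions.

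First I would record that $g$ being a unit modulo $p$ forces any solution $x$ to be a unit as well (as already noted in Corollary~\ref{crtfixedpt}), which immediately gives the ``relatively prime to $p$'' claim. Next, I would use Theorem~\ref{thm:xrep} to set up the bijection: the map $x \mapsto x + mp^e$ sends solutions in $B_j$ to solutions in $B_{j+1}$, and since $f(x,c) \equiv 0 \pmod{p^e}$ if and only if $f(x + mp^e, c) \equiv 0 \pmod{p^e}$, this map is a bijection between the solution sets of consecutive blocks. Hence every block contains exactly as many solutions as $B_1$.

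Then I would invoke Corollary~\ref{crtfixedpt}, which establishes that $B_1 = \{1, \ldots, p^e m\}$ contains exactly $m$ solutions. Summing over the $k$ blocks yields $km$ solutions in total on $\{1, \ldots, p^e km\}$, completing the count. I would take $k$ to be a positive integer here, since the counting statement is only meaningful in that case. The final remark is a direct substitution: since the order $m$ of $g$ modulo $p$ always divides $p-1$, the quantity $k = (p-1)/m$ is a positive integer, and plugging it in gives $km = p-1$ solutions over the range $\{1, \ldots, (p-1)p^e\}$.

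I do not expect any genuine obstacle in this argument, as it is essentially an accounting of disjoint periods. The only points requiring care are confirming that Theorem~\ref{thm:xrep} yields a genuine bijection between consecutive blocks (so that solution counts are \emph{equal}, not merely comparable), and verifying that the blocks $B_1, \ldots, B_k$ partition the stated range $\{1, \ldots, p^e km\}$ exactly, with neither overlap nor omission.
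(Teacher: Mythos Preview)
Your proposal is correct and follows essentially the same approach as the paper: both use Theorem~\ref{thm:xrep} to establish that the shift $x \mapsto x + mp^e$ preserves the congruence, decompose $\{1,\ldots,kmp^e\}$ into $k$ consecutive blocks of length $mp^e$, and invoke Corollary~\ref{crtfixedpt} for the count of $m$ solutions on the first block. Your remark that $k$ should be taken as a positive integer is a sensible clarification the paper leaves implicit.
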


\begin{proof}
From Theorem~\ref{thm:xrep}, we notice that
$$g^{x-1+c+mp^e}-(x+mp^e) \equiv g^{x-1+c}-x \pmod{p^e}$$
So we know that $g^{x-1+c} \equiv x \pmod{p^e}$
 has the same number of solutions
 for $x \in \{1,2, \ldots, p^em \}$  as it does
for $x \in \{k_1m+1, k_1m+2, \ldots, k_1m+p^em\}$ for any $k_1 \in \Z$.
Since $\{1, 2, \ldots, kp^em\} = \{1, 2, \ldots, p^em\} \cup \{p^em+1, p^em+2,\ldots, 2p^em\} \cup \ldots \cup \{(k-1)p^em+1, (k-1)p^em+2, \ldots, kp^em\}$,
and we know that there are $m$ solutions for $x \in \{1,2, \ldots, p^em \}$ from Corollary~\ref{crtfixedpt},
then the number of solutions for $x \in \{1, 2, \ldots, p^ekm\}$ is equal to $km$, and each solution $x$ is relatively prime to $p$ since $g$ is a unit.

\end{proof}



\subsection{Treating both $x$ and $c$ as variables}

One difficulty with the Welch equation is to ensure that the domains of $x$ and $c$ modulo $p$ will scale nicely modulo $p^e$. A key problem is that we cannot predict how the multiplicative order of $g$ modulo $p$ changes when $g$ is considered modulo $p^e$. Generally, the multiplicative order of $g$ modulo $p^e$ is simply $p^{e-1} \cdot \ord_{p}(g)$. When this happens, we can predict the exact period (see Theorem~\ref{thm:repetition}). 

However, it is not always the case that the multiplicative order of $g$ modulo $p^e$ is $p^{e-1}\ord_{p}(g)$. Sometimes, the multiplicative order of $g$ modulo $p^e$ is equal to the multiplicative order of $g$ modulo $p^{e-1}$.
\begin{example}
Let $p = 11$ and $g= 3$. Let $e = 1$, and take powers of $g$ as follows:  $3^1 \equiv 3 \pmod{11}$, $3^2 \equiv 9 \pmod{11}$, $3^3 \equiv 5 \pmod{11}$, $3^4 \equiv 4 \pmod{11}$, $3^5 \equiv 1 \pmod{11}$. Thus, the multiplicative order of $3$ modulo $11$ is $5$.

Now take $e = 2$, and take powers of $g$ as before. Observe that $3^5 \equiv 1 \pmod{11^2}$. Thus, the multiplicative order of $3$ modulo $11^2$ is $5$.
\end{example}

When the multiplicative orders are equal, the exact period of $c$ modulo $p^e$ is shorter than we expect. To account for such a problem we have the following remark followed by a theorem.

\begin{remark}\label{lem:logexistornot}
Let $p$ be an odd prime, and fix $g$. Consider $a \in \mathbb{Z}$. Then $\log_g(a)$ may or may not exist.
\end{remark}

\begin{theorem}
Let $p$ be an odd prime. Given a fixed $x$, the number of solutions, $c$, to $g^{x-1+c} \equiv x \pmod{p^e}$ for $c \in \{1, 2, \ldots, m\cdot p^{e-1} \}$ is either $\frac{mp^{e-1}}{\ord_{p^e}(g)}$ when $\log_g(a)$ exists or $0$ when $\log_g(a)$ does not exist.
\end{theorem}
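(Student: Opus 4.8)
The plan is to reduce the counting of $c$ to a standard discrete-logarithm problem in the cyclic subgroup generated by $g$ in $(\Z/p^e\Z)^\times$. First I would substitute $n = x-1+c$, so that the congruence $g^{x-1+c} \equiv x \pmod{p^e}$ becomes $g^n \equiv x \pmod{p^e}$. Because $x$ is fixed, the map $c \mapsto n = x-1+c$ is just a shift, so counting valid $c$ in $\{1, 2, \ldots, m p^{e-1}\}$ is equivalent to counting the corresponding exponents $n$ over a range of the same length. The whole question then becomes: for how many $n$ in a range of length $m p^{e-1}$ is $g^n \equiv x \pmod{p^e}$? This also clarifies the notation: the $a$ in $\log_g(a)$ should be read as $x$, so that ``$\log_g(a)$ exists'' means precisely that $x$ lies in $\langle g \rangle \subseteq (\Z/p^e\Z)^\times$.

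Next I would split into the two cases of the statement according to whether $\log_g(x)$ exists. If $x \notin \langle g \rangle$, then $g^n \equiv x \pmod{p^e}$ has no solution for any $n$, hence no valid $c$ exists and the count is $0$, matching the statement. If $x \in \langle g \rangle$, fix one solution $n_0 = \log_g(x)$. By the standard structure of cyclic groups, $g^{n} \equiv g^{n_0}$ forces $g^{n-n_0} \equiv 1$, so the full solution set is exactly $\set{n : n \equiv n_0 \pmod{\ord_{p^e}(g)}}$. Translating back through $n = x-1+c$, the valid values of $c$ are precisely those in the single residue class $c \equiv n_0 - x + 1 \pmod{\ord_{p^e}(g)}$.

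It then remains to count how many integers in a range of length $m p^{e-1}$ lie in a fixed residue class modulo $\ord_{p^e}(g)$. The key fact I would invoke is the divisibility $\ord_{p^e}(g) \mid m p^{e-1}$, which follows immediately from Theorem~\ref{thm:repetition}: that theorem establishes $g^{m p^{e-1}} \equiv 1 \pmod{p^e}$, and the order of $g$ modulo $p^e$ must divide any exponent that sends $g$ to $1$. Given this divisibility, the range $\{1, \ldots, m p^{e-1}\}$ (equivalently the shifted range for $n$) is a union of complete residue systems modulo $\ord_{p^e}(g)$, and hence contains exactly $\frac{m p^{e-1}}{\ord_{p^e}(g)}$ elements of any single residue class. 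This yields the claimed count.

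The main obstacle I anticipate is not the counting itself, which is routine once the setup is in place, but rather making the reduction clean: one must justify that the residue class of $c$ modulo $\ord_{p^e}(g)$ is well defined and that passing from $c$ to $n$ preserves the count over the specified range. Both are automatic precisely because the range length $m p^{e-1}$ is a multiple of $\ord_{p^e}(g)$, so the only genuinely load-bearing ingredient is the divisibility furnished by Theorem~\ref{thm:repetition}. Once that is invoked, the two cases fall out directly from the existence or nonexistence of $\log_g(x)$.
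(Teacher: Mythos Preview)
Your proposal is correct and follows essentially the same route as the paper: reduce to the discrete-log equation $g^{x-1+c}\equiv a\pmod{p^e}$, split into the two cases according to whether $\log_g(a)$ exists, and in the affirmative case solve for $c$ modulo $\ord_{p^e}(g)$ and count residues in the interval $\{1,\ldots,mp^{e-1}\}$. Your version is in fact slightly more careful than the paper's, since you explicitly invoke Theorem~\ref{thm:repetition} to justify the divisibility $\ord_{p^e}(g)\mid mp^{e-1}$ that makes the count come out evenly, whereas the paper simply asserts that ``there will be a solution for $c$ every multiple of $\ord_{p^e}(g)$.''
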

\begin{proof}
Let $x \equiv a \pmod{p^e}$, and by Remark~\ref{lem:logexistornot} we note that there are two cases: when $\log_g(a)$ exists, and when $\log_g(a)$ does not exist. Now if $\log_g(a)$ exists then we have:
\begin{align}
g^{a-1+c} &\equiv a \pmod{p^e} \nonumber \\
a-1+c &\equiv \log_g(a) \pmod{\ord_{p^e}(g)} \nonumber \\
c &\equiv (\log_g(a)) - a + 1\pmod{\ord_{p^e}(g)}~\label{eqn0mpe1}
\end{align}

Thus, when $\log_g(a)$ exists, we can solve equation~\ref{eqn0mpe1} modulo $\ord_{p^e}(g)$, since we are looking at values for $c \in \{1, 2, \ldots, mp^{e-1} \}$. There will be a solution for $c$ every multiple of $\ord_{p^e}(g)$. Hence, the number of possible values for $c$ is $\frac{mp^{e-1}}{\ord_{p^e}(g)}$. 

When $\log_g(a)$ does not exist, we are not able to solve for $c$ in equation~\ref{eqn0mpe1}. For this case, we are not able to find $c$. Thus, the number of possible values for $c$ is $0$.
\end{proof}

Now we try to count the number of $(x, c)$ pairs of solutions modulo $p^e$. Proposition~\ref{miniproofxc} gives the number of pairs of solutions modulo $p$. Theorem~\ref{thm:finalxcsoln} gives the number of pairs of solutions modulo $p^e$ by using a multivariable Hensel's lemma from~\cite[Proposition 3.4]{hold_rob} together with the Chinese Remainder Theorem.

\begin{definition}
 We let $T_e$ denote the set of solution pairs $(x, c)$, where $x \in \{1, 2, \ldots, mp^e \}$ and $c \in \{1, 2, \ldots, mp^{e-1} \}$, $p \nmid x$ to the equivalence:
\begin{equation*}
g^{x-1+c} \equiv x \pmod{p^e}.
\end{equation*}
Also, let $|T_e|$ denote the number of solution pairs $(x, c)$ modulo $p^e$ in the set $T_e$.
\end{definition}

\begin{proposition} \label{miniproofxc}
Let $p$ be an odd prime. Then $|T_1| = m^2$ for $x \in \{1, 2, \ldots, mp \}$, and for $c \in \{1, 2, \ldots, m\}$.
\end{proposition}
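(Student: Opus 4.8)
The plan is to reduce the whole statement to Corollary~\ref{crtfixedpt} applied in the case $e=1$. That corollary treats $c$ as an arbitrary fixed constant, so I would first fix a single value $c \in \{1, 2, \ldots, m\}$ and invoke the corollary with $e=1$: it guarantees that there are exactly $m$ values of $x \in \{1, 2, \ldots, pm\}$ solving $g^{x-1+c} \equiv x \pmod{p}$, and that every such $x$ is relatively prime to $p$. In particular the side condition $p \nmid x$ in the definition of $T_1$ is automatic, so for each fixed $c$ the number of admissible first coordinates is precisely $m$.

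Next I would assemble the global count. A solution pair is determined by both of its coordinates, so pairs carrying distinct values of $c$ are automatically distinct; hence the solution sets attached to different values of $c$ are pairwise disjoint and no pair is counted twice. Summing the per-$c$ counts then gives
$$|T_1| = \sum_{c=1}^{m} m = m \cdot m = m^2,$$
which is exactly the claim.

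The only points requiring care are verifying that Corollary~\ref{crtfixedpt} really does apply for every $c$ in the range (it does, since $c$ enters the corollary purely as a fixed parameter and no hypothesis constrains it) and confirming that the domain $x \in \{1, \ldots, pm\}$ supplied by the corollary at $e=1$ coincides with the domain $x \in \{1, \ldots, mp^{e}\}$ at $e=1$ used in the definition of $T_1$. Both checks are routine. I do not expect a genuine obstacle: the proposition is essentially a bookkeeping consequence of the $e=1$ fixed-$c$ count, so the substance of the argument lies entirely in citing the correct earlier result and in noting that the ranges for $x$ and $c$ line up.
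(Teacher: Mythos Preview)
Your argument is correct, and it takes a genuinely different route from the paper's own proof.

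The paper argues ``by rows in $x$'': it first invokes the value-set description (Theorem~\ref{thm:vset} and Lemma~\ref{lem:vset}) to show that exactly $m$ residues $x\in\{1,\ldots,p\}$ admit a solution, each paired with a unique $c$ modulo $m$; then it extends to $x\in\{1,\ldots,mp\}$ via the shift relation of Theorem~\ref{thm:x0c0soln}, which sends a solution pair $(x_0,c_0)$ to $(x_0+np,\,c_0-n)$ and thereby multiplies the count by $m$. Your proof argues ``by columns in $c$'': you fix $c$, cite Corollary~\ref{crtfixedpt} at $e=1$ to get exactly $m$ admissible $x$ in $\{1,\ldots,mp\}$, and then sum over the $m$ values of $c$. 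Your route is shorter and avoids Section~\ref{hintnonprg} entirely; the price is that it leans on the $p$-adic interpolation and Hensel machinery already packaged into Corollary~\ref{crtfixedpt}, whereas the paper's proof of this proposition is self-contained at the level of elementary congruences. Since Corollary~\ref{crtfixedpt} precedes this proposition and does not depend on it, there is no circularity, and your approach is a legitimate and arguably cleaner alternative.
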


\begin{proof}
Consider $f(x, c) = g^{x-1+c} - x$. We first want to find the number of solutions modulo $p$, where $f(x, c) \equiv 0 \pmod{p}$. From Theorem~\ref{thm:vset} we know that the size of the value set, $V = \{f(p, c) \pmod{p} \mid 1\leq c \leq m\}$ gives us the number of $(x, c)$ solutions for $x \in \{1, 2, \ldots, p\}$. We know that the elements in $V$ are unique because they are simply powers of $g$, as shown in Lemma~\ref{lem:vset}. So, there will be $m$ unique values of $x \in V$.

Expand the range to $x \in \{1, 2, \ldots, p, p+1, \ldots, m\cdot p - 1, m\cdot p\}$. Since we are concerned with solutions pairs of $x$ and $c$, we want to ensure that the solution pairs do not repeat after expanding our range of $x$. 

Consider values of $x$ such that $x_0 \equiv x_1 \ldots \equiv x_n$, and that are defined such as in Theorem~\ref{thm:x0c0soln}. Further, Theorem~\ref{thm:x0c0soln} implies that each $x_i$ will get a unique value of $c_i$, so that for all $(x_i, c_i)$ solution pairs, $c_i \neq c_j$. We know this because of periodicity of $c$. In particular, there will be $m$ different values of $c$ that correspond to $m$ different multiples of $x$. 
So when we expand the range of $x$ to $\{1, 2, \ldots, p, p+1, \ldots, m\cdot p - 1, m\cdot p\}$, number of unique solutions will increase by a multiple of $m$. Thus $|T_1| = m^2$.

\end{proof}

\begin{lemma}\label{lemmaonesoln}
Let $p$ be an odd prime, and consider a fixed $x_0 \in \mathbb{Z} / m\mathbb{Z}$,
and $g \in \mathbb{Z}_p$, $p \nmid g$. Consider the function $f(x, c) = 
\omega(g)^{x_0} \oneunit{g}^{x-1+c} - x $, and let
\begin{equation*}
|N_1| = | \{ (\bar{x}, \bar{c}) \in (\mathbb{Z}_p/p \mathbb{Z}_p)^\times \times (\mathbb{Z}_p/p \mathbb{Z}_p)) \mid f(x, c) \equiv 0 \pmod{p} \}|
\end{equation*}

then $|N_1| = p$.
\end{lemma}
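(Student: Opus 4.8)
The plan is to reduce the defining congruence modulo $p$ and observe that it completely decouples the variable $c$ from $x$. The crucial fact is that $\oneunit{g} \in 1 + p\Zp$, so $\oneunit{g} \equiv 1 \pmod{p}$, and hence $\oneunit{g}^{x-1+c} \equiv 1 \pmod{p}$ no matter what the exponent $x-1+c$ is. Consequently the condition $f(x,c) \equiv 0 \pmod{p}$ collapses to the single congruence $\omega(g)^{x_0} \equiv x \pmod{p}$ in $x$ alone, with no dependence on $c$.

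First I would justify that $\oneunit{g}^{x-1+c} \equiv 1 \pmod{p}$: writing $\oneunit{g} = 1 + pt$ with $t \in \Zp$ and expanding by the binomial theorem (exactly as in the interpolation arguments already carried out, e.g.\ in Theorem~\ref{gouvea-interp}), every term beyond the leading $1$ carries a factor of $p$. Next, since $\omega(g)$ is a $(p-1)$-st root of unity in $\Zp^\times$, the element $\omega(g)^{x_0}$ is a unit, so it reduces to a nonzero residue $\bar{a} \in (\Zp/p\Zp)^\times$. The congruence $\omega(g)^{x_0} \equiv x \pmod{p}$ therefore has exactly one solution $\bar{x} = \bar{a}$ in $(\Zp/p\Zp)^\times$, and this $\bar{x}$ is automatically a unit, consistent with the constraint imposed on the first coordinate.

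Finally, because the reduced congruence does not involve $c$ at all, the second coordinate $\bar{c}$ is free to be any of the $p$ residues in $\Zp/p\Zp$. Pairing the unique admissible $\bar{x}$ with each of these $p$ values of $\bar{c}$ yields exactly $1 \cdot p = p$ solution pairs, so $|N_1| = p$. There is no genuine obstacle here; the only point requiring a little care is to confirm that reducing the $p$-adic power $\oneunit{g}^{x-1+c}$ modulo $p$ is legitimate and returns $1$ irrespective of how the exponent is interpreted, which follows immediately from $\oneunit{g} \equiv 1 \pmod{p}$.
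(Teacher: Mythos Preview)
Your proposal is correct and follows essentially the same approach as the paper: both reduce $f(x,c)$ modulo $p$ to $\omega(g)^{x_0}-x$, observe that this pins down $\bar{x}$ uniquely (and as a unit) while leaving $\bar{c}$ free over $p$ residues, and conclude $|N_1|=p$. The only cosmetic difference is that the paper justifies $\oneunit{g}^{x-1+c}\equiv 1\pmod{p}$ via the $\exp/\log$ power series expansion, whereas you invoke $\oneunit{g}\equiv 1\pmod{p}$ directly; both are valid and lead to the same reduction.
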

\begin{proof}

Consider the power series representation of the function, $f(x, c)$ as follows:
\begin{align*}
f(x, c) &= \omega(g)^{x_0} \oneunit{g}^{x-1+c} - x \\
&= \omega(g)^{x_0}\exp((x-1+c)\log \oneunit{g}) - x\\
&= \omega(g)^{x_0}(1 + (x-1+c) \log( \oneunit{g}) + (x-1+c)^2 \log( \oneunit{g})/ 2! \\
&\text{ } \text{ } + \text{ higher-order terms in powers of} \log(\oneunit{g}) ) - x \\
&\equiv \omega(g)^{x_0} - x \pmod{p}
\end{align*}

Similarly to Theorem~\ref{x0fixedpt}, we observe that if $f(x, c) \equiv 0 \pmod{p}$, then $\omega(g)^{x_0} \equiv x \pmod{p}$. Since $x_0$ and $g$ are fixed, and $c$ is free to be anything, there will be $p$ solutions to this equation. Thus, $|N_1| = p$.

\end{proof}

\begin{proposition}\label{prop:hlnoc}
Let $p$ be an odd prime, $g \in \mathbb{Z}_p$, $p \nmid g$, and consider a fixed $x_0 \in \mathbb{Z}/m\mathbb{Z}$. 
Consider the function $f(x, c) = \omega(g)^{x_0} \oneunit{g}^{x-1+c} - x $, and let
\begin{equation*}
|N_e| = | \{ (\bar{x}, \bar{c}) \in (\mathbb{Z}_p/p^e \mathbb{Z}_p)^\times \times (\mathbb{Z}_p/p^e \mathbb{Z}_p)) \mid f(x, c) \equiv 0 \pmod{p^e} \}|
\end{equation*}
then \begin{equation*}
|N_e| = p^{e-1} |N_1| .
\end{equation*}

\end{proposition}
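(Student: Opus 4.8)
The plan is to count solutions $(\bar x, \bar c)$ to $f(x,c) \equiv 0 \pmod{p^e}$ by lifting from solutions modulo $p$, where the free variable $c$ plays a passive role and all the Hensel-lifting happens in $x$. The key structural observation is that $f(x,c)$ depends on $c$ only through the combination $x-1+c$ inside the exponent of $\oneunit{g}$, so for each fixed residue of $c$ we have essentially the one-variable situation already analyzed in Theorem~\ref{x0fixedpt}.

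First I would fix a residue $\bar c \in \Z_p/p^e\Z_p$ and regard $f(\cdot, c)$ as a one-variable function of $x$. Because $\oneunit{g}^{c-1}$ is a fixed unit and $\log(\oneunit{g}) \in p\Zp$, the computation in Theorem~\ref{x0fixedpt} shows $f(x,c) \equiv \omega(g)^{x_0} - x \pmod p$ and $f'(x,c) \equiv -1 \not\equiv 0 \pmod p$. Thus for each fixed $\bar c$ there is a unique $\bar x$ modulo $p$ solving $f \equiv 0 \pmod p$ (matching Lemma~\ref{lemmaonesoln}, since letting $\bar c$ range over its $p$ residues gives $|N_1| = p$), and since $f'(\bar x, \bar c) \not\equiv 0 \pmod p$, the generalization of Hensel's lemma from \cite[Cor. 3.3]{hold_rob} lifts this to a unique solution $\bar x$ modulo $p^e$ for each lift of $\bar c$.

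Next I would assemble these fibered counts. Going from modulo $p^{e-1}$ to modulo $p^e$, each of the $|N_{e-1}|$ solution pairs lives over some $\bar c$ modulo $p^{e-1}$, which has exactly $p$ lifts to a residue modulo $p^e$; for each such lift of $c$, the nonvanishing derivative guarantees exactly one lift of the corresponding $x$. This gives the recursion $|N_e| = p \cdot |N_{e-1}|$, and unwinding yields $|N_e| = p^{e-1}|N_1|$. Alternatively one argues directly: the $p$ residues of $\bar c$ modulo $p$ each lift in $p^{e-1}$ ways to $\bar c$ modulo $p^e$, and each determines a unique $\bar x$, giving $p \cdot p^{e-1}/p \cdot |N_1|$ — so I would prefer the clean inductive bookkeeping to avoid miscounting.

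The main obstacle I anticipate is making precise \emph{which} variable Hensel's lemma acts on. The multivariable lemma \cite[Proposition 3.4]{hold_rob} is really only needed to certify that lifting $c$ freely does not obstruct lifting $x$; the honest content is that $\partial f/\partial x \not\equiv 0 \pmod p$ while $c$ is treated as a parameter ranging over all its lifts. I would take care to state that each solution pair modulo $p^{e-1}$ lifts to exactly $p$ pairs modulo $p^e$ (the $p$ choices for the top digit of $c$, each forcing a unique top digit of $x$), and that no two distinct pairs modulo $p^{e-1}$ produce overlapping lifts, so the count is exact rather than merely an upper bound.
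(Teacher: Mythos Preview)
Your proposal is correct and rests on the same key computation as the paper: $f(x,c)\equiv \omega(g)^{x_0}-x\pmod p$ and $\partial f/\partial x\equiv -1\not\equiv 0\pmod p$. The difference is purely in packaging. The paper computes both partial derivatives (finding $\partial f/\partial c\equiv 0\pmod p$ as well) and then invokes the multivariable Hensel lemma \cite[Proposition~3.4]{hold_rob} as a black box to conclude that each solution modulo $p$ lifts in $p^{e-1}$ ways modulo $p^e$. You instead parametrize by $c$, apply the one-variable Hensel lemma \cite[Cor.~3.3]{hold_rob} fiberwise, and do the bookkeeping $|N_e|=p\,|N_{e-1}|$ by hand. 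Your route is slightly more elementary and makes the mechanism transparent (indeed it is essentially how one would prove the cited multivariable lemma in this rank-one case), while the paper's route is shorter once that lemma is available. Either way the content is identical, and your remark that the multivariable statement ``is really only needed to certify that lifting $c$ freely does not obstruct lifting $x$'' is exactly right.
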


\begin{proof}
First recall that we can expand the function, $f(x, c)$ as:
\begin{align*}
f(x, c) &= \omega(g)^{x_0} \oneunit{g}^{x-1+c} - x \\
&= \omega(g)^{x_0}(1 + (x-1+c) \log( \oneunit{g}) + (x-1+c)^2 \log( \oneunit{g})/ 2! \\
&\text{ } \text{ } + \text{ higher-order terms in powers of} \log(\oneunit{g}) ) - x \\
&\equiv \omega(g)^{x_0} - x \pmod{p}
\end{align*}

Now, we consider the partial derivatives of $f(x, c)$ where we note that $\log\oneunit{g} \equiv 0 \pmod{p}$, since $\oneunit{g} \equiv 1 \pmod{p}$, thus obtaining:
\begin{align*}
\frac{\partial f }{ \partial x} (x, c)& = \omega(g)^{x_0} (\exp((x-1+c)\log \oneunit{g}) \cdot \log \oneunit{g}) - 1 \\
&\equiv -1 \pmod{p}\\
\frac{\partial f}{\partial c} (x, c)&= \omega(g)^{x_0} (\exp((x-1+c)\log \oneunit{g}) \cdot \log \oneunit{g})\\
&\equiv 0 \pmod{p} .
\end{align*}

Observe that $\frac{\partial f}{\partial x}(x, c) \not\equiv 0 \pmod{p}$. Thus, we can apply a multivariable version of Hensel's lemma (see ~\cite[Proposition 3.4]{hold_rob}) to observe that there are $p^{e-1}$ possible ways to lift each solution modulo $p$ to a solution modulo $p^e$.

Thus, $|N_e| = p^{e-1}|N_1|$.
\end{proof}

\begin{theorem}\label{thm:finalxcsoln}
Let $p$ be an odd prime, and fix $g$ where $p \nmid g$. Then $|T_e| = p^{e-1}|T_1| = m^2p^{e-1}$ is the number of solutions to the congruence
\begin{equation*}
g^{x-1+c} \equiv x \pmod{p^e}
\end{equation*}
for $(x, c)$ such that $x \in \{1, 2, \ldots, mp^e\}$, and $c \in \{1, 2, \ldots, mp^{e-1} \}$. Further the set of $x$ that solve this equation are all distinct modulo $p^e$.
\end{theorem}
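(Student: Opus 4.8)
The plan is to decompose the solution set $T_e$ according to the residue $x_0 = (x-1+c) \bmod m$ and then reassemble the count by the Chinese Remainder Theorem, in direct analogy with the proof of Corollary~\ref{crtfixedpt} but now tracking the extra variable $c$. Since $m \mid p-1$ we have $\gcd(m,p)=1$, so $\Z/mp^e\Z \cong \Z/m\Z \times \Z/p^e\Z$ and $\Z/mp^{e-1}\Z \cong \Z/m\Z \times \Z/p^{e-1}\Z$; this lets me record each admissible $x$ as the pair $(x \bmod m,\, x \bmod p^e)$ and each admissible $c$ as $(c \bmod m,\, c \bmod p^{e-1})$. The first step is to write $T_e = \bigsqcup_{x_0 \in \Z/m\Z} T_e^{(x_0)}$, where $T_e^{(x_0)}$ collects the solution pairs with $x-1+c \equiv x_0 \pmod m$. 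For a pair in $T_e^{(x_0)}$ the interpolation results of Section~\ref{interpolation} give $g^{x-1+c} = \omega(g)^{x_0}\oneunit{g}^{x-1+c}$, so membership is equivalent to the interpolated congruence $f(x,c) = \omega(g)^{x_0}\oneunit{g}^{x-1+c} - x \equiv 0 \pmod{p^e}$ together with $x-1+c \equiv x_0 \pmod m$.

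Next I would count each $T_e^{(x_0)}$ through its two Chinese-Remainder components separately. The mod-$m$ condition $x-1+c \equiv x_0 \pmod m$ is a single linear congruence in $(x \bmod m,\, c \bmod m)$ and so has exactly $m$ solutions, and the unit condition $p \nmid x$ is automatic since the interpolated equation forces $x \equiv \omega(g)^{x_0} \pmod p$, a unit. For the mod-$p$ component I would invoke Proposition~\ref{prop:hlnoc}, which gives $|N_e| = p^{e-1}|N_1| = p^{e-1}\cdot p = p^e$ solutions $(\bar x, \bar c)$ with \emph{both} coordinates taken modulo $p^e$. Because $\oneunit{g} \in 1+p\Zp$, its order modulo $p^e$ divides $p^{e-1}$ (the computation already carried out in Theorem~\ref{thm:repetition}), so $f(x,c)$ is periodic in $c$ with period dividing $p^{e-1}$; hence reducing the $c$-coordinate from modulo $p^e$ to modulo $p^{e-1}$ is exactly $p$-to-one on the solution set and leaves $p^e/p = p^{e-1}$ pairs $(x \bmod p^e,\, c \bmod p^{e-1})$. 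Combining the $m$ choices of mod-$m$ data with the $p^{e-1}$ choices of mod-$p$ data via the Chinese Remainder Theorem yields exactly $m\cdot p^{e-1}$ pairs in $T_e^{(x_0)}$, and summing over the $m$ values of $x_0$ gives $|T_e| = m\cdot(m\cdot p^{e-1}) = m^2 p^{e-1}$. Since $|T_1| = m^2$ by Proposition~\ref{miniproofxc}, this is the asserted $|T_e| = p^{e-1}|T_1|$.

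The main obstacle I anticipate is the bookkeeping created by the two different moduli for $x$ and $c$: Proposition~\ref{prop:hlnoc} naturally produces pairs with $c$ taken modulo $p^e$, whereas the theorem ranges $c$ only over $\{1, \ldots, mp^{e-1}\}$, so the delicate step is verifying that the reduction in the $c$-coordinate is uniformly $p$-to-one. This is exactly where the periodicity of $\oneunit{g}$ must enter: one shows that the unique $x$ attached to a given $c$ by the Hensel lift depends only on $c$ modulo $p^{e-1}$, so the $p$ values $c, c+p^{e-1}, \ldots, c+(p-1)p^{e-1}$ all pair with the same $x$ and collapse to one pair. Finally, for the distinctness clause I would check that the CRT reconstruction is injective, so that distinct mod-$m$ and mod-$p$ data produce distinct solution pairs and the count $m^2 p^{e-1}$ involves no overcounting; distinctness of the associated $x$ modulo $p^e$ within a fixed class then reduces to the injectivity of $c \mapsto x$, since two solutions sharing the same $x$ would force $\oneunit{g}^{c} \equiv \oneunit{g}^{c'} \pmod{p^e}$, which (when $\oneunit{g}$ has order $p^{e-1}$) returns $c \equiv c' \pmod{p^{e-1}}$ and makes the pairs coincide.
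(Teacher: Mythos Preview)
Your approach is essentially that of the paper: decompose by the residue $x_0 = (x-1+c) \bmod m$, count solutions of the interpolated equation using the multivariable Hensel lemma (Proposition~\ref{prop:hlnoc} together with Lemma~\ref{lemmaonesoln}), and then reassemble via the Chinese Remainder Theorem, finally adjusting the $c$-range from $mp^e$ down to $mp^{e-1}$ by periodicity. Your treatment of the $p$-to-one reduction in the $c$-coordinate is in fact more explicit than the paper's, which simply divides the count by $p$ at the end citing Theorem~\ref{thm:repetition}.
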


\begin{proof}
We use Chinese Remainder Theorem to count the number of possible $x_0$ and $c_0$ modulo $m$. Here $x_0$ is defined by interpolation in Section~\ref{interpolation} and $c_0$ is a solution pair to $x_0$. Note that $c_0$ can be chosen to be any value modulo $m$ (see Theorem~\ref{thm:x0c0soln} and Corollary~\ref{corol:rptc}).

By interpolation, we know that $g^{x-1+c} = \omega(g)^{x_0}\oneunit{g}^x \equiv x \pmod{p^e}$, since $x - 1 + c \equiv x_0 \pmod{m}$. 
So rewrite the equation and consider the following equivalences:
\begin{align*}
x-1+c &\equiv x_0 \pmod{m} \\
x &\equiv x_0 +1 - c \pmod{m}
\end{align*}
Also consider $x_1$, a solution in modulo $p^e$. So we have the following two equations to work with:
\begin{align}
x &\equiv x_0 +1 - c \pmod{m} \label{firstxeqn} \\
x &\equiv x_1 \pmod{p^e} \label{x_1_pe}
\end{align}

Apply the Chinese Remainder Theorem to the equations. There will be exactly one $x \in \mathbb{Z}/p^em\mathbb{Z}$ that solves equations~\ref{firstxeqn} and~\ref{x_1_pe} for each $x_0$ that we choose. Note that there are $m$ possible choices of $x_0$.

Similarly, we consider the set of equations for $c$ so we can apply Chinese Remainder Theorem. Consider $c \equiv c_0 \pmod{m}$ and $c = c_1 \pmod{p^e}$, where $c_1$ is the corresponding solution pair to $x_1$. By the Chinese Remainder Theorem, there will be exactly one $c \in \mathbb{Z}/p^em\mathbb{Z}$ that solves the previous two equations for each $c_0$ that we choose. Observe that there are $m$ possible choices of $c_0$.

Proposition~\ref{prop:hlnoc} and Lemma~\ref{lemmaonesoln} use Hensel's lemma to show that there are $p \cdot p^{e-1}$ choices for $(x, c)$ modulo $p^e$. So the combinations of $x_0$, $c_0$ and $(x, c)$ pairs modulo $p^e$ will yield $m^2 p^e$ pairs of solutions for $x \in \{1, 2, \ldots, mp^e \}$, and $c \in \{1, 2, \ldots, mp^e\}$.

Notice that the domain of $c$ is not the size that we want. By 
both Hensel's Lemma and Chinese Remainder Theorem we 
produced values of $c \in \{1, \ldots,mp^{e-1}, \ldots, mp^e\}$ 
but we want $c \in \{1, \ldots, mp^{e-1}\}$. However by 
Theorem~\ref{thm:repetition} we know that the period of $c$ is length 
$mp^{e-1}$. Thus, we can divide $mp^e$ by $p$. Hence, we obtain 
$|T_e| = m^2p^{e-1}$.

\end{proof}


\subsection{Considering $p = 2$}

Before we move ahead, we will  look at solutions modulo 2.

\begin{lemma} \label{p2oddsoln}
For fixed $c \in \Z$ and $g \in 2\Z+1$, all solutions $x \in \Z$ to the equation $f(\cdot, c)=g^{x-1+c} \equiv x \pmod{2^e}$ are odd.
\end{lemma}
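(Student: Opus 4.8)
The plan is to show that no even $x$ can satisfy $g^{x-1+c} \equiv x \pmod{2^e}$ when $g$ is odd, by reducing the congruence modulo $2$. The statement is essentially the $p=2$ analogue of Proposition~\ref{x=p}, so I would follow that same short argument.

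\emph{First} I would observe that since $g \in 2\Z+1$, we have $g \equiv 1 \pmod{2}$, and therefore $g^{x-1+c} \equiv 1 \pmod{2}$ for every choice of the exponent $x-1+c$ (regardless of its sign or value, since a power of an odd number is always odd). \emph{Next}, suppose for contradiction that $x$ is an even solution, so that $x \equiv 0 \pmod{2}$. Reducing the defining congruence $g^{x-1+c} \equiv x \pmod{2^e}$ further modulo $2$ — which is legitimate because $2 \mid 2^e$ for $e \geq 1$ — yields $g^{x-1+c} \equiv x \pmod{2}$. \emph{Then} the left-hand side is $\equiv 1$ while the right-hand side is $\equiv 0 \pmod{2}$, giving $1 \equiv 0 \pmod{2}$, a contradiction. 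Hence every solution $x$ must be odd.

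The argument is entirely routine and carries no real obstacle; the only point worth stating carefully is that the reduction from modulus $2^e$ down to modulus $2$ is valid, and that a power of an odd integer is odd irrespective of the exponent (here one should note $x-1+c$ is an ordinary integer exponent, so no interpolation subtleties intrude). This is exactly the $p=2$ mirror of the fact used for odd primes in Proposition~\ref{x=p}, where the obstruction was that $g^k$ can never be $\equiv 0 \pmod{p}$ when $g$ is a unit; here the same unit condition forces $g^{x-1+c}$ to land in the odd residue class, which no even $x$ occupies.
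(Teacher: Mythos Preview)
Your proof is correct and follows essentially the same approach as the paper: reduce the congruence modulo $2$, use that an odd $g$ has odd powers so $g^{x-1+c}\equiv 1\pmod{2}$, and conclude $x\equiv 1\pmod{2}$. The only cosmetic difference is that you phrase it as a contradiction from assuming $x$ even, whereas the paper derives $x\equiv 1\pmod{2}$ directly.
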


\begin{proof}
Let $g \in 2\Z+1$ and $c \in \Z$ be fixed.
\begin{eqnarray*}
g^{x-1+c}-x &\equiv& 0 \pmod{2^e}
\\g^{x-1+c}-x &\equiv& 0 \pmod{2}.
\end{eqnarray*}
Since $g$ is odd, $g^{x-1+c}$ is also odd for $x-1+c \in \Z$. Then $g^{x-1+c} \equiv 1 \pmod{2}$ and we get
\begin{eqnarray*}
1-x &\equiv& 0 \pmod{2}
\\x &\equiv& 1 \pmod{2}.
\end{eqnarray*}
So all integer solutions $x$ are odd.
\end{proof}

\begin{theorem} \label{p2fixedpt} For $p = 2$, let $c \in \Z$ and $g \in \Z_2^\times$ be fixed.
  Then there is exactly one
  solution to each of the equations
$$ \oneunit{g}^{x-1+c} = x $$
and 
$$-\oneunit{g}^{x-1+c} = x $$
for $x \in 1+2\Z_2$.
\end{theorem}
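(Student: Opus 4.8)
The plan is to mirror the proof of Theorem~\ref{x0fixedpt}, replacing the odd-prime interpolation by the $p=2$ interpolation of Theorem~\ref{p2 interp} and applying the same generalization of Hensel's lemma (see~\cite[Cor. 3.3]{hold_rob}) to each of the two functions $f_0(x) = \oneunit{g}^{x-1+c}-x$ and $f_1(x) = -\oneunit{g}^{x-1+c}-x$ separately. Since $\oneunit{g} \in 1+4\Z_2$ by Theorem~\ref{p2 interp}, I would first record that $\log\oneunit{g} \in 4\Z_2$: writing $\oneunit{g}=1+4M$ and examining the defining series, the leading term $4M$ has valuation $2$ while every later term $(-1)^{n+1}(4M)^n/n$ has valuation at least $3$. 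Consequently $x\log\oneunit{g} \in 4\Z_2$ for every $x \in \Z_2$, so the expansion $\oneunit{g}^{x-1+c} = \oneunit{g}^{c-1}\exp(x\log\oneunit{g})$ converges and gives a genuine power series in $x$ to which the derivative computations apply.

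Next I would establish the base case modulo $2$. For any odd $a$ (say $a=1$) we have $\oneunit{g}^{a-1+c}\equiv 1\pmod 2$, so $f_0(a)\equiv 1-a\equiv 0\pmod 2$ and likewise $f_1(a)\equiv -1-a\equiv 0\pmod 2$. Differentiating the power series term by term gives $f_0'(x) = \oneunit{g}^{x-1+c}\log\oneunit{g}-1$ and $f_1'(x) = -\oneunit{g}^{x-1+c}\log\oneunit{g}-1$; since $\log\oneunit{g}\equiv 0\pmod 2$ and $\oneunit{g}^{x-1+c}\equiv 1\pmod 2$, both reduce to $-1\pmod 2$ and are therefore units in $\Z_2$.

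With the approximate root $a$ satisfying $\lvert f_i(a)\rvert_2 < 1 = \lvert f_i'(a)\rvert_2^2$, the generalization of Hensel's lemma produces for each $i\in\{0,1\}$ a unique $x\in\Z_2$ with $x\equiv a\pmod 2$ and $f_i(x)=0$. Because the ball $\{x : \lvert x-a\rvert_2<1\}$ is exactly $1+2\Z_2$, this yields precisely one solution in $1+2\Z_2$ to each of the two equations; uniqueness across all of $1+2\Z_2$ follows from $f_i'$ being a unit at every point of $1+2\Z_2$, not merely at $a$. The main thing to watch is the $p=2$ convergence bookkeeping: the whole argument hinges on $\oneunit{g}\in 1+4\Z_2$ rather than merely $1+2\Z_2$, since this is what forces $\exp(x\log\oneunit{g})$ to converge and what keeps the derivative a unit, so that the unadorned form of Hensel's lemma applies without the failure typical of the prime $2$.
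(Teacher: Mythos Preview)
Your proposal is correct and follows essentially the same approach as the paper's proof: both expand $\oneunit{g}^{x-1+c}$ as $\oneunit{g}^{c-1}\exp(x\log\oneunit{g})$, verify the base case $a=1$ modulo $2$, check that the derivative is a unit modulo $2$, and apply the generalization of Hensel's lemma from \cite[Cor.~3.3]{hold_rob}. Your write-up is somewhat more explicit about the $p=2$ convergence bookkeeping (why $\oneunit{g}\in 1+4\Z_2$ is needed) and about why the Hensel root lands in $1+2\Z_2$, but the underlying argument is the same.
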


\begin{proof} As with the analogous proof for odd primes, we start by finding solutions modulo $p=2$.  Since
  $\oneunit{g} \equiv 1 \pmod{2}$, both equations reduce to 
$$1 \equiv x \pmod{2}.$$
This expression clearly has exactly one solution, and note that this expression agrees with Lemma~\ref{p2oddsoln}.

Since we know that
  $\oneunit{g}$ is in $1+4\Z_2$, we have
  that
\begin{eqnarray*}
\oneunit{g}^{x-1+c}=\oneunit{g}^{c-1}\oneunit{g}^x&=& 
\oneunit{g}^{c-1}(\exp(x \log(\oneunit{g})) \\
=\oneunit{g}^{c-1}\ \ (1&+&x\log(\oneunit{g})+
x^2\log( \oneunit{g})^2/2! \\
&+& \mbox{higher order terms in powers of }
\log(\oneunit{g}))
\end{eqnarray*}
where from ~\cite[Proposition 4.5.9]{gouvea}, we know that 
$\log(\oneunit{g}) \in 4\Z_2$.  
Now we have a convergent
power series since $|\log(\oneunit{g})^i/i!|_2 \to 0$ as $i \to \infty$ (see ~\cite[Chapter 2, Thm 3.1]{bachman}), and we will look at $f(\cdot,c)$ and its 
derivative to see if we can apply Hensel's lemma. 

To count solutions to $F_0(x) \equiv \oneunit{g}^{x-1+c} \equiv x$,
we let $a = 1$, and let
\begin{eqnarray*}
f(\cdot,c)= F_0(x)-x=\oneunit{g}^{c-1}(1&+&x\log(\oneunit{g})+
x^2\log( \oneunit{g})^2/2! \\
&+& \mbox{higher order terms in powers of }
\log(\oneunit{g})) - x
\end{eqnarray*}

Then we have 
\begin{eqnarray*}
f(a,c)\equiv(1)(1&+&1(0)+
1^2(0) \\
&+& \mbox{higher order terms equivalent to 0 (mod 2) }
) -  1 \pmod{2}
\end{eqnarray*}
\begin{eqnarray*}
\equiv1-1
\equiv 0 \pmod{2}
\end{eqnarray*}
 Also, since we know $\log{(\oneunit{g})}\in 4\Z_2$, $\log{(\oneunit{g})} \equiv 0 \pmod{2}$, and we have
\begin{eqnarray*}
f'(\cdot,c)=\oneunit{g}^{c-1}(\log{(\oneunit{g})}+x\log{(\oneunit{g})}^2+x^2\log{(\oneunit{g})^3/2!}+...)-1
\end{eqnarray*}
\begin{eqnarray*}
f'(a,c)=\oneunit{g}^{c-1}(\log{(\oneunit{g})}+\log{(\oneunit{g})}^2+...)-1 \equiv -1 \not\equiv 0 \pmod{2}
\end{eqnarray*}
which is also convergent (see \cite[Proposition 4.4.4]{gouvea}).
Now we know we can apply a generalization of Hensel's lemma (see ~\cite[Cor. 3.3]{hold_rob}), 
which states that there is a unique $x \in \Z_2$ for which $x \equiv 1 \pmod{p}$ and $f(x)=0$ in $\Z_2$.

Note that similar steps can be used to show there is one solution in $\Z_2$ to $F_1(x) \equiv -\oneunit{g}^{x-c+1} \equiv x$ as well.

\end{proof}

\begin{corollary} \label{crtfixedpt2} For $p=2$, let $g,c \in \Z$ be fixed.
  Then there is exactly $1$ solution to the congruence
\begin{equation*}
g^{x-1+c} \equiv x \pmod{2^e} 
\end{equation*}
for $x \in \{1,2, \ldots, 2^e \}$. 
\end{corollary}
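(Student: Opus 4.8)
The plan is to mirror the structure of Corollary~\ref{crtfixedpt}, but with the simplification that the multiplicative order of $g$ modulo $2$ is $1$ (since $g$ is a unit it is odd, so $g \equiv 1 \pmod 2$); this is why we expect exactly one solution rather than $m$, and why no Chinese Remainder Theorem step is needed here. First I would invoke Lemma~\ref{p2oddsoln} to restrict attention to odd $x$, so that every solution lies in $1+2\Z_2$ and corresponds to an odd residue modulo $2^e$.

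The next step is to replace the genuine exponential $g^{x-1+c}$ by one of the interpolated functions $F_0(x) = \oneunit{g}^{x-1+c}$ or $F_1(x) = -\oneunit{g}^{x-1+c}$ from Theorem~\ref{p2 interp}. The observation that makes this clean is that for odd $x$ we have $x-1 \equiv 0 \pmod 2$, so $x-1+c \equiv c \pmod 2$; the parity of the exponent is therefore governed entirely by the fixed constant $c$, not by $x$. Consequently exactly one of $F_0, F_1$ agrees with $g^{x-1+c}$ for all odd $x$ simultaneously: if $g \in 1+4\Z_2$, or if $g \in 3+4\Z_2$ with $c$ even, then $g^{x-1+c} = F_0(x)$; while if $g \in 3+4\Z_2$ with $c$ odd, then $g^{x-1+c} = F_1(x)$. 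So in every case the congruence $g^{x-1+c} \equiv x \pmod{2^e}$ collapses to $F_i(x) \equiv x \pmod{2^e}$ for a single, case-determined $i \in \{0,1\}$.

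Finally I would apply Theorem~\ref{p2fixedpt}, which provides exactly one solution $x \in 1+2\Z_2$ to each of $F_0(x) = x$ and $F_1(x) = x$. The relevant point for counting modulo $2^e$ is that the derivative computed there satisfies $f'(a,c) \equiv -1 \not\equiv 0 \pmod 2$, so the unique solution modulo $2$ is a simple root and lifts uniquely through the generalized Hensel's lemma to a single solution modulo $2^e$. Together with Lemma~\ref{p2oddsoln}, which rules out any even $x$, this yields exactly one solution in $\{1, 2, \ldots, 2^e\}$. The main obstacle is the bookkeeping in the case $g \in 3+4\Z_2$, where a priori the choice between $F_0$ and $F_1$ depends on $x$; the parity reduction $x-1+c \equiv c \pmod 2$ for odd $x$ is precisely what collapses this into a single governing function, after which the count is immediate from Theorem~\ref{p2fixedpt}.
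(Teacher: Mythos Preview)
Your proposal is correct and follows essentially the same approach as the paper: restrict to odd $x$ via Lemma~\ref{p2oddsoln}, use the parity reduction $x-1+c \equiv c \pmod{2}$ to select a single interpolated function $F_0$ or $F_1$ according to $g \bmod 4$ and the parity of $c$, and then invoke Theorem~\ref{p2fixedpt} together with the Hensel lifting to obtain the unique solution modulo $2^e$. Your exposition is in fact a bit more explicit about why no Chinese Remainder step is needed (since $m=1$), but the underlying argument is the same.
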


\begin{proof}

Having determined that $x$ is odd by Lemma~\ref{p2oddsoln}, we know that $x-1+c \equiv 1-1+c \equiv c \pmod{2}$. 
Because c is fixed and our functions $F_0(x)$ and $F_1(x)$ are defined on $x \in 1+2\Z_2$,
we only need to count solutions to $F_1(x)$ if $\oneunit{g} \in 3+4\Z_2$ and $c$ is odd and $F_0(x)$ otherwise.
The number of solutions where $x$ is odd in the correct equation will be the same as the number of solutions to $f(\cdot,c)$.

Theorem~\ref{p2fixedpt} implies that there is exactly one
$x \in 1+2\Z_2$ 
in the appropriate function $F_0(x)$ or $F_1(x)$ that we have chosen based on $c$.
Note that since each lifting in the proof is equivalent to $1$ modulo $2$, we know $f(\cdot,c)$ and $f'(\cdot,c)$ are still defined, and $x-1+c$ is still equivalent to $c$,
 we can use the same function after each lift. So we get the unique solution to our congruence, $g^{x-1+c} \equiv x \pmod{2^e}$.

\end{proof}


\section{Conclusion}

Most of the previous analysis on the Welch equation, $g^{x-1+c} \equiv x \pmod{p^e}$, has looked at solutions only modulo $p$ 
and for $x \in \{1, 2, \ldots, p\}$, and the conclusions about the number of solutions on this range are mainly statistical (see ~\cite{drakakis}, for example).
We have found here that there are clear patterns for this equation modulo $p^e$ when we extend the range of $x$ to $x \in \{1, 2, \ldots, p^em\}$, 
where $m$ is the order of $g$ modulo $p$. Specifically, there are always $m$ solutions (for all primes) on this range when we fix $c$, and $m^2p^{e-1}$ solutions (for odd primes)
when we consider $c \in \{0, 1, \ldots, p^{e-1}m\}$ as an additional variable. If we can find how these solutions
are distributed on subintervals of length $p^e$, we may have a better understanding of what happens on the original range of $x$ from $1$ to $p^e$.

The value set described in Theorem~\ref{thm:vset} (i.e. $f(p,c) \equiv g^{p-1+c}-p \pmod{p}$) also helps us find which values of $x$ in the range from
$1$ to $p$ are solutions modulo $p$. This is especially helpful in the cases where $g$ is not a primitive root, since there is a
smaller value set that restricts the values of $x$ that may be a solution.

Finally, there are several other patterns found in analyzing the function $f(\cdot,c) \equiv g^{x-1+c}-x \pmod{p^e}$ that are left unexplored in this paper, such as the appearance of 
$p$ pairs of ``doubles," where $f(x,c)\equiv f(x+1,c) \pmod{p^e}$  when $g$ is a primitive root, whose investigation may help with understanding the distribution of solutions.

\section{Acknowledgements}
Both authors would like to thank Joshua Holden and Margaret M. Robinson for their endless support and guidance during their summer REU.


\begin{bibdiv}
\begin{biblist}

\bib{drakakis}{article}{
	title = {Three Challenges in {C}ostas Arrays},
	volume = {89},
	issn = {0381-7032},
	journal = {Ars Combinatoria},
	author = {Konstantinos Drakakis},
	year = {2008},
	pages = {167--182}
}

\bib{gouvea}{book}{
	edition = {2},
	title = {p-adic Numbers: An Introduction},
	isbn = {3540629114},
	shorttitle = {p-adic Numbers},
	publisher = {Springer},
	author = {Fernando Quadros Gouvea},
	month = {jul},
	year = {1997}
}

\bib{katok}{book}{
	volume = {37},
	title = {p-adic Analysis Compared with Real},
	isbn = {978-0-8218-4220-1},
	publisher = {American Mathematical Society},
	author = {Svetlana Katok},
	year = {2007}
}

\bib{koblitz}{book}{
	edition = {2nd },
	title = {$p$-adic Numbers, $p$-adic Analysis, and
          {Zeta-Functions}},
        series = {Graduate Texts in Mathematics},
	isbn = {0387960171},
	publisher = {Springer},
	author = {Neal Koblitz},
	month = {jul},
	year = {1984}
}

\bib{hold_rob}{article}{
	title = {Counting Fixed Points, Two Cycles, and Collisions of the Discrete Exponential Function Using $p$-adic Methods},
	author = {Joshua Holden},
        author = {Margaret M. Robinson},
	volume = {92},
	url = {http://journals.cambridge.org/abstract_S1446788712000262},
	doi = {10.1017/S1446788712000262},
	number = {2},
	journal = {Journal of the Australian Mathematical Society},
	year = {2012},
	pages = {163--178}
}

\bib{rickard}{inproceedings}{
	accessdate = {July 27, 2014}
	author = {Scott Rickard}
	booktitle={Proceedings of the 7th IMA International Conference on Mathematics of Signal Processing}
	title = {Open Problems in Costas Arrays},
	note = {\url{http://ima.org.uk/_db/_documents/Rickard_2.pdf}}
	year = {2006}
	month = {dec}
	
}
\bib{bachman}{book}{
	title = {Introduction to $p$-adic Numbers and Valuation Theory},
	author = {George Bachman}
	publisher = {Academic Press Inc.},
	isbn = {64-17793}
	year = {1964},
	pages = {47}
}
\end{biblist}
\end{bibdiv}

\end{document}